
\documentclass[12pt]{amsart}
\usepackage{amssymb}
\usepackage{comment}

\theoremstyle{plain}
\newtheorem{thm}{Theorem}[section]

\newtheorem{prop}[thm]{Proposition}
\newtheorem{lemma}[thm]{Lemma}
\newtheorem{defn}{Definition}
\newtheorem{remark}{Remark}

\def\L{\mathcal L}

\def\CC{\mathbb C}

\def\HH{\mathbb H}
\def\RR{\mathbb R}
\def\QQ{\mathbb Q}

\def\Pr{\rm Pr}
\def\L{{\bf L}}
\def\I{{\bf I}}

\def\H{\mathcal{H}}


\usepackage{comment}
\title[Palindromic Generators]{Discreteness Criteria and the Hyperbolic Geometry of
Palindromes} 
\author{Jane Gilman and Linda Keen }
\address{Department of  Mathematics, Rutgers University, Newark, NJ
 07079} \email{gilman@rutgers.edu}
\address{Department of  Mathematics,  Lehman College and Graduate
Center,  CUNY, Bronx, 10468} \email{Linda.keen@lehman.cuny.edu}
\thanks{PSC-CUNY}
\thanks{Rutgers Research Council and Yale University}
\begin{document}
\maketitle
\begin{abstract}

We consider non-elementary representations of two generator free
groups in $PSL(2,\mathbb{C})$, not necessarily discrete or free, $G
= \langle A, B \rangle$. A word in $A$ and $B$, $W(A,B)$,  is a
palindrome if it reads the same forwards and backwards.  A word in a
free group is {\sl primitive} if it is part of a minimal generating
set. Primitive elements of the free group on two generators can be
identified with the positive rational numbers. We study the geometry
of palindromes and the action of $G$ in $\HH^3$ whether or not $G$
is discrete. We show that there is a {\sl core geodesic} $\L$ in the
convex hull of the limit set of $G$ and use it to prove three
results:  the first is that there are well defined maps from the
non-negative rationals and from the primitive elements to $\L$; the
second is that $G$ is geometrically finite if and only if the axis
of every non-parabolic palindromic word in $G$ intersects $\L$ in a
compact interval; the third is   a description of the relation of
the pleating locus of the convex hull boundary
  to the core geodesic and to palindromic elements.

\end{abstract}

\maketitle

\section{Introduction}

 In this paper we are interested in
the
  interplay between algebraic and
geometric properties of non-elementary $PSL(2,\CC)$ representations
of a free group on two generators. A word in a rank two free group
is primitive if it is part of some two generator generating set. A
word is a palindrome if it reads the same forwards and backwards.
The connection between primitive words and palindromes in a free
group of rank two is well understood. While the image of a
palindrome is a palindrome under a representation, if the image is
not a free group, the concept of primitive element does not make
sense. The images of a pair of  primitive generators for the free
group, however,    still generate the group and the palindromic
property remains important.

From the algebraic point of view, we know that primitive words in
the free group of rank two are indexed by the rationals and that
each is either conjugate to a palindrome or a product of palindromes
\cite{GKenum}. We represent the groups as subgroups of $PSL(2,\CC) =
SL(2,\CC)/\{ \pm Id \}$, so that they are subgroups of the isometry
group of   hyperbolic three space and define the notion of a {\em
core geodesic} for the group. This is a geodesic in $\HH^3$ about
which the convex hull of the limit set of $G$ is symmetric.  We
obtain a map, {\sl  the $\Pi$-map}, from the axes of the
representations of primitive words, and hence from the rationals to
this core geodesic (Theorems \ref{thm:firstPi} and
\ref{thm:secondPi}).

We use the core geodesic to obtain our main result,  a necessary and
sufficient discreteness condition for $G$. We prove that if the
representation is
  geometrically finite,
the axis of every non-parabolic palindromic word intersects the core
geodesic
  in a compact subinterval and conversely,
if the axis of every non-parabolic palindrome intersects the core
geodesic in a compact subinterval, the representation is discrete
(Theorems~\ref{thm:gfif},~\ref{thm:gfifp} and ~\ref{thm:conv}).

When the representation is free, discrete  and geometrically finite,
we consider the pleating locus of the convex hull boundary and we
show that either the pleating locus is not maximal or there are
leaves intersecting the core geodesic orthogonally and these are
limits of axes of palindromes (Theorem \ref{thm:pleats}).   One can
also define the notion of a pleating locus for the convex hull
boundary when the representation is discrete but contains elliptic
elements, and thus represents an orbifold rather than a manifold. We
prove that Theorem \ref{thm:pleats} applies in that case as well.

The organization of the paper is as follows: sections
\ref{section:prelim}, \ref{section:hyp3} and \ref{section:primwds}
are preliminary in nature. We fix notation and review terminology
including the definition of the core geodesic.  Section
\ref{section:geometric} begins with the key lemma which establishes
the map from  palindromic axes to the core geodesic
(Lemma~\ref{prop:palifforthog}). In
section~\ref{section:discreteness} the necessary discreteness
condition is proved first for groups without parabolics and then
extended to groups with parabolics. The proof of sufficiency then
follows. Section~\ref{section:pleating} begins by carefully
extending notation and definitions of pleated surfaces to groups
with parabolics and elliptics. It ends with a description of the
relation of the pleating locus of the convex hull boundary
  to the core geodesic (Theorem \ref{thm:pleats}).

\section{Preliminaries: Notation and Terminology}
\label{section:prelim}

We recall some terminology and fix some notation.
\subsection{Notation}

 We are interested in  conjugacy classes of    representations
$\rho$ of the free group $F= \langle a, b \rangle$ into
$PSL(2,\CC)$.  Since, however, the properties we are concerned with
are conjugacy invariant, we work with specific representations. That
is, we can pick a representation  $G= \rho(F)$ with  $\rho(a) = A$
and $\rho(b) = B$  in the conjugacy  class  by
 by choosing specific matrices in $SL(2,\CC)$ for $A$ and $B$.

  In order to simplify
the exposition, we use the same notation for  a matrix $A$ in
$SL(2,\CC)$, its equivalence class  in $PSL(2,\CC)$    and the
isometry of $\HH^3$ it induces. If the group with which we are
working   is not clear from the context, we will make it explicit.

\subsection{Primitive Elements}

If $F$ is the free group on $n$ generators, an element $W \in F$ is
termed {\sl primitive} if it can be extended to be part of a minimal
generating set, that is, a set of $n$ generators. If we are talking
about a free group of rank two, an element $b$ is a {\sl primitive
associate} for a primitive element $a$ if $F= \langle a,b \rangle$.
A primitive element $a$ has a set of primitive associates that are
described in {\cite{GKwords, GKenum}} and the unordered pair $a$ and
$b$ is called a {\sl primitive pair}.

\subsection{Palindromic condition}

In any group, a {\sl palindrome} is a word in the generators that
reads the same forwards and backwards. We let $\underleftarrow{W}$
denote the word read backwards. For any two generator group,
$\langle A, B \rangle$, if   $W(A,B)$ is a word,
$\underleftarrow{W}=(W(A^{-1},B^{-1}))^{-1}$. The word $W(A,B)$ is a
palindrome if and only if  $W(A,B)= {\underleftarrow{W}}(A,B)$. Note
that this definition depends on the generating set.

For representations $\rho(F)=G \subset PSL(2,\CC)$ we are interested
both in the full set of palindromic words and the set of palindromic
generators.  In the free group on two generators, the palindromic
primitives are well understood;    this is summarized in section
\ref{section:primwds}.

\section{Preliminaries: Hyperbolic three space} \label{section:hyp3}

Hyperbolic three space $\HH^3$ is the set of points $\{(x,y,t)\in
\RR^3 \;|\; t>0\}$ together with the hyperbolic metric $d_{\HH}$. In
what follows we use the notation and terminology of Fenchel
\cite{Fench}.

 The boundary of $\HH^3$ is (isomorphic to) the Riemann
sphere ${\hat{\CC}}$ and we write $\partial\HH^3 = {\hat{\CC}}$ and
${\overline{\HH^3}} = \HH^3 \cup {\hat{\CC}}$. Two distinct points
$u$ and $v$ in ${\overline{\HH^3}}$ determine a unique geodesic in
$\HH^3$ which we denote by $[u,v]$ and its closure,
$\overline{[u,v]}$, intersects $\partial{\HH^3}$ in two points, $x$
and $y$ which are the {\sl ends} of $[u,v]$. We include in our
geodesics, {\sl degenerate lines}; that is, if $x=y$ we consider
{\sl the degenerate line} $[x,x]$,  which is a point on
$\partial\HH^3$.

  The trace of an element $X \in SL(2,\CC)$  is well defined.  We
  can use this to classify the image of $X$ in $PSL(2,\CC)$.  Note
  that the classification is independent of the pull-back.

\subsection{Axes and the Core  $A$-$B$ Normal.} \label{section:core}

{\sl Every} element $X \in PSL(2,\CC)$ fixes a geodesic called its
{\sl axis} and denoted by $Ax_X$. If $X$ is loxodromic or elliptic
with fixed points $x$ and $y$ in $\partial\HH^3$, then $Ax_X =
[x,y]$; if $X$ is parabolic with fixed point $p$, then $Ax_X =
[p,p]$. In all cases, for any pair $A,B \in PSL(2,\CC)$ there is a
unique perpendicular from $Ax_A$ to $Ax_B$ regardless of whether the
axes are proper or degenerate lines (see 2.6 of \cite{Fench}).

 In earlier
papers, \cite{GKwords,GKenum} we denoted this unique perpendicular,
or normal, to the axes of the generators $A$ and $B$ by $\L$ or
$\L_{AB}$ and called it the $\L-$line. The $\L-$line depends, of
course, upon the choice of generators. It will play an important
role in what follows.

\begin{defn} For a fixed pair $A$ and $B$ we call the common normal,
the {\sl core geodesic}.
\end{defn}

This terminology will be justified later.

\smallskip

We recall that a  subgroup of $PSL(2,\CC)$ is elementary if  there
is some point in $\hat\CC$ whose orbit under the group is
  finite. If the axes of the two generators $A$ and $B$ share exactly one end,
then the common normal reduces to a point on the boundary and the
group $G = \langle A,B \rangle$ is elementary.

\smallskip
In what follows we restrict ourselves to non-elementary groups $G$.
This excludes the case of a degenerate core geodesic.

\subsection{Half-turns} \label{section:halfturns}

A {\sl half-turn} about a geodesic $X$,
is denoted by $H_X$ and is the unique orientation preserving
involution fixing $X$ point-wise.

If $p \not\in X$ is any point in ${\overline{\HH^3}}$, there is a
unique perpendicular from $p$ to $X$;    $H_X$ maps this to the
perpendicular from $H_X(p)$ to $X$ so that $H_X$ fixes the geodesic
$[p,H_X(p)]$. If $Q = [x,y]$ with $H_X(Q) = Q$, then $Q= [x,H_X(x)]$
and $Q$ is perpendicular to $X$. Conversely,  If $Q=[x,y]$ is any
geodesic perpendicular to $X$, then we must have $y=H_X(p)$ and $Q =
[x,H_X(x)]= H_X(Q)$.

\subsection{The core geodesic and the double altitude} \label{section:hex}

In our theorems a particular element of $PSL(2,\CC)$  determined
algebraically by $A$ and $B$ plays an important role. We give the
geometric construction of its axis here and the algebraic
characterization in section~\ref{sec:mapPi}.

 Choose   lines $\L_A$ and $\L_B$   so
that $A$ and $B$ factor as half-turns: $A = H_{\L_A} \circ H_{\L}$
and $B = H_{\L} \circ H_{\L_B}$. The axis of $AB$ is then the common
perpendicular to $\L_A$ and $\L_B$ and the six geodesics $Ax_A$,
$\L$, $Ax_B$, $\L_B$, $Ax_{AB}$, $\L_A$ determine  a standard
right-angled hexagon $\H$ (\cite{Fench}).   The lines $\L_A$ and
$\L_B$ are thus also core geodesics (for the pairs of generators
$(A, AB)$ and $(B, AB)$). The {\sl sides} of the hexagon lie
alternately on axes and core geodesics.   The hexagon $\H$ can be
formed whether or not $G$ is discrete.

The common perpendiculars to opposite pairs of sides of the hexagon
are the altitudes. For example, there is an altitude $\mathcal A$
from $Ax_{AB}$ to $\L$. Note that $Ax_{BA} = Ax_{A^{-1}B^{-1}}$ and
$H_{\L}ABH_{\L}= A^{-1}B^{-1}$ so that the altitude from $Ax_{BA}$
to $\L$ in $H_{\L}(\H)$ and  $\mathcal A$ lie on a common line, the
  perpendicular from $Ax_{AB}$ to $Ax_{BA}$ which we denote by by $N_{AB}$ . This is the {\em
double altitude} of the doubled hexagon, $\H \cup H_{\L}(\H)$.

\subsection{Geometrically primitive elliptics} \label{section:primell}
In any cyclic group, an element is {\sl primitive} if it generates
the group. This definition obviously applies to elements in the
cyclic group generated by an elliptic element of finite order in
$PSL(2,\CC)$. Such an elliptic element acts on $\partial\HH^3$ as a
rotation by a certain angle. A primitive elliptic element that
rotates through a {\sl minimal} angle is termed {\sl geometrically
primitive}.

\section{Preliminaries: Primitive Words and Rational Numbers} \label{section:primwds}

We recall facts about primitive words and rationals in two generator
groups. As above, $F=\langle a,b \rangle$ is the free group and $G$
is the image of a representation into $SL(2,\RR)$ 
(see \cite{GKenum} for fuller details).

We use $\Pr_F$ to denote the set of primitive elements in $F$ and
$\Pr_G$ to denote the image of $\Pr_F$ in $G$. Since $G$ is not
 necessarily free, the term primitive may not apply to an element of
$\Pr_G$.  Nonetheless, an element of $\Pr_G$ is a generator of a two
generator group and has a set of associates with which it generates
the full group. If an element of $Pr_G$ is elliptic of finite order,
it may not be geometrically primitive.

Conjugacy classes of primitive words in the generators $A,B$ of a
rank two free group are indexed by the non-negative rationals
together with infinity\footnote{Primitive words in the generators
$A^{-1}$ and $B$ are indexed by the non-positive rationals}. We
denote a rational number by $p/q$ where $p$ and $q$ are relatively
prime integers with $q
>0$. There are several useful iteration schemes for choosing
representatives of the conjugacy
classes, see for example, \cite{GKwords,GKenum,KS}.

In the recursive scheme defined in \cite{GKenum},  the words in the
free group $F=\langle a,b \rangle$   are denoted by $e_{p/q}$ and
have the following properties: when $pq$ is even there is always a
unique palindrome in the conjugacy class and this is $e_{p/q}$. When
$pq$ is odd, elements in the $e_{p/q}$ conjugacy can be factored
(possibly in a number of different ways) as the product of
palindromes, but the enumeration scheme determines a unique
factorization of $e_{p/q}$ as a product of a pair of palindromes
that have previously appeared in the scheme.
  Pairs of words $(e_{p/q},
e_{r/s})$ are primitive associates if and only if $|ps-rq|=1$.

  If a representation of $F$ into $PSL(2,\CC)$  is conjugate to a subgroup of
$PSL(2,\RR)$ and is discrete, but not free, the pairs of generators
and the  indexing by non-negative rationals must be slightly
modified to take into account generators that are not geometrically
primitive.

\section{Geometric Implications } \label{section:geometric}
Let us see what geometric implications  there are  for arbitrary
representations $\rho$ of $F=\langle a,b \rangle $ into
  $  PSL(2,\CC)$ whose image $G  = \rho(F)$ is  non-elementary.

  If $w$ is a
word in $F$,  we write $W$ for $\rho(w)$ and use the notation
$\rho(w(a,b)) = W(A,B)$  and $\rho(e_{p/q}) = E_{p/q}$. If $|ps-qr|
=1$, the images
  $E_{p/q}=\rho(e_{p/q})$, $E_{r/s}=\rho(e_{r/s})$  of the primitive associate pair
  $ e_{p/q} , e_{r/s} $ in $F$    generate the group $G$. As usual we denote the core geodesic   by $\L$.

\subsection{Palindromes and the core geodesic} \label{section:palcore}

The following  proposition says that $\L$ is a line of symmetry for
the the convex hull of the limit set of the group and justifies
calling it the core geodesic.  Note that if the group is not
discrete the convex hull is all of $\HH^3$.
 Note also that  the words $W$ in this proposition need not be primitive.

\begin{lemma}\label{prop:palifforthog} Let $G= \langle A,B \rangle$ be a non-elementary
subgroup of $PSL(2,\CC)$.
 The word $W= W(A,B)$ is a palindrome in $A$ and $B$ if and only if the axis of
$W$ is orthogonal to the core geodesic,   $\L$. \end{lemma}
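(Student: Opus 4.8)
The plan is to exploit the factorization of $A$ and $B$ as products of half-turns, since the core geodesic $\L$ is exactly the axis shared in those factorizations. Concretely, I would fix lines $\L_A$ and $\L_B$ so that $A = H_{\L_A}\circ H_{\L}$ and $B = H_{\L}\circ H_{\L_B}$, as in section~\ref{section:hex}. The key algebraic observation is how conjugation by $H_{\L}$ interacts with the generators: since each $H$ is an involution, one computes $H_{\L} A H_{\L} = H_{\L}H_{\L_A} = (H_{\L_A}H_{\L})^{-1} = A^{-1}$, and similarly $H_{\L} B H_{\L} = B^{-1}$. Thus $H_{\L}$ conjugates each generator to its inverse, and the main engine of the proof is the resulting identity on an arbitrary word.

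First I would establish the conjugation formula $H_{\L}\,W(A,B)\,H_{\L} = W(A^{-1},B^{-1})$, which follows by applying the generator-level identities letter by letter and using that conjugation is an antihomomorphism-free ring homomorphism on words (it respects products and inverses). Next I would connect this to the palindrome condition via the backward-reading operator defined in the excerpt, namely $\underleftarrow{W} = (W(A^{-1},B^{-1}))^{-1}$. Combining these gives $H_{\L}\,W\,H_{\L} = (\underleftarrow{W})^{-1}$, so that $H_{\L}\,W\,H_{\L} = W^{-1}$ precisely when $W = \underleftarrow{W}$, i.e.\ precisely when $W$ is a palindrome.

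The geometric half of the argument is then to translate the relation $H_{\L}\,W\,H_{\L} = W^{-1}$ into a statement about the axis of $W$. Since $H_{\L}$ is an isometry, it sends $Ax_W$ to $Ax_{H_{\L} W H_{\L}} = Ax_{W^{-1}} = Ax_W$ (an element and its inverse share an axis, in both the loxodromic/elliptic and parabolic cases covered in section~\ref{section:core}). So the palindrome condition forces $H_{\L}(Ax_W) = Ax_W$; that is, $H_{\L}$ fixes the axis of $W$ setwise but reverses its orientation. By the characterization of half-turns in section~\ref{section:halfturns}, a geodesic $Q$ satisfies $H_{\L}(Q) = Q$ if and only if $Q$ is perpendicular to $\L$, which is exactly orthogonality of $Ax_W$ to $\L$. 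Running this equivalence in reverse gives the converse direction.

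I expect the main obstacle to be the degenerate and borderline cases rather than the central computation. One must be careful that $Ax_W = Ax_{W^{-1}}$ genuinely holds when $W$ is parabolic (where the axis is the degenerate line $[p,p]$) and must rule out the possibility that $H_{\L}$ fixes $Ax_W$ pointwise by lying along $\L$ itself or by sharing a fixed point with it, which is where non-elementarity of $G$ and the non-degeneracy of $\L$ are needed. I would also want to confirm that $H_{\L}(Q)=Q$ with orientation reversed (as opposed to $H_{\L}$ acting as the identity on $Q$) is the correct reading of the half-turn characterization, so that the equivalence with orthogonality to $\L$ is clean in every case.
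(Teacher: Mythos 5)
Your proposal is correct and takes essentially the same route as the paper's own proof: the conjugation identities $H_{\L} A H_{\L}=A^{-1}$ and $H_{\L} B H_{\L}=B^{-1}$ (which the paper asserts and you derive from the half-turn factorization of section~\ref{section:hex}), the resulting word identity $H_{\L} W(A,B) H_{\L}=W(A^{-1},B^{-1})$, its translation into the palindrome condition via $\underleftarrow{W}=(W(A^{-1},B^{-1}))^{-1}$, and the characterization of geodesics invariant under $H_{\L}$ as exactly the perpendiculars to $\L$. The borderline issues you flag (parabolic axes, the need to pass from $H_{\L}(Ax_W)=Ax_W$ with endpoints swapped back to $H_{\L}WH_{\L}=W^{-1}$, and ruling out $Ax_W=\L$) are glossed over in the paper at precisely the same points, so your outline is at least as careful as the original.
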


\begin{proof} Let $H_{\L}$ be the half turn about $\L$ as in section \ref{section:halfturns}.
Then $$H_{\L} A H_{\L}=A^{-1} \mbox{ and }   H_{\L} B H_{\L} =
B^{-1}.$$ Thus $$ H_{\L} W(A,B) H_{\L} = W(A^{-1},B^{-1}).$$ If  $W$
is a palindrome,  then $W(A,B)^{-1} = W(A^{-1},B^{-1})$  and the
axis of $W$ is sent to itself under $H_{\L}$ interchanging its fixed
points. It is, therefore, orthogonal to $\L$. On the other hand,
following section~\ref{section:halfturns}, if the axis of $W$ is
orthogonal to $\L$, then $H_{\L}$ sends the axis of $W$ to itself
interchanging the fixed points. Since it sends $A$ and $B$ to their
respective inverses,  $Ax_{W(A,B)} = Ax_{W(A^{-1},B^{-1})}$, so that
$W(A,B) = W(A^{-1},B^{-1})^{-1}$ and $W$  is a palindrome.
\end{proof}

\subsection{Changing generators}

 Let $$\phi_a:\langle a,b \rangle \rightarrow \langle a,ab \rangle,
 \, \, \phi_b:\langle a,b\rangle \rightarrow \langle ba,b \rangle$$
 define automorphisms of $F$.
 Let $c=ab$, $d=ba$.

 Using the enumeration scheme in \cite{GKenum} and the discussion in \cite{GKwords}, it is not difficult
 to see that if $e_{p/q}$ is a palindrome in $a$ and $b$, then either
 $\phi_a(e_{p/q})$ is a palindrome in $a$ and $c$ or $\phi_b(e_{p/q})$ is a palindrome
 in $d$ and $b$. If $e_{p/q}$ is a not palindrome in $a$ and $b$, then both
 $\phi_a(e_{p/q})$ and $\phi_b(e_{p/q})$ are palindromes in the respective new generators.

 Using these facts, it follows that the lines $\L_A$ and $\L_B$ are core
 geodesics for $G$ presented as $G= \langle A,C \rangle$ and $G= \langle
 B,D  \rangle$ respectively.  That is, $W(A,C)$ is a palindrome in
 $A$ and $C$ if and only if the axis of $W$ is orthogonal to the
 $\L_A$ core geodesic and similarly for $\L_B$.

\subsection{The map $\Pi$}\label{sec:mapPi}

Let $X$ and $Y$ be any two words in $G$ and recall that the double
altitude $N_{XY}$ is the common normal to the axes of $XY$ and $YX$.
In what follows we compute with several related objects
 but try not to distinguish between
 them in our notation except when required for clarity. These include a two by two
 non-singular matrix, the same matrix normalized to be in $SL(2,\CC)$,
 and its image in $PSL(2,\CC)$. Further a  matrix determines a geodesic, namely the
 axis that it fixes when acting on  $\HH^3$. If such a matrix is of order two,
 it is the {\sl line  matrix} determining that geodesic.

\begin{lemma}\label{prop:NAB}
 Let $U$ and $V$ be any pair of
  words in the non-elementary group $G$ that are palindromes in the generators
  $A$ and  $B$. Then $N_{UV}$,  the common
orthogonal  to the axes of $UV$ and $VU$,   is orthogonal to $\L$
and  is the axis of a palindrome.  \end{lemma}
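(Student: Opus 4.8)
The plan is to exploit the same half-turn symmetry that drove Lemma~\ref{prop:palifforthog}. First I would record the conjugation identity that makes the whole argument run. Since $U$ and $V$ are palindromes in $A,B$, the computation in the proof of Lemma~\ref{prop:palifforthog} gives $H_{\L}UH_{\L}=U^{-1}$ and $H_{\L}VH_{\L}=V^{-1}$, and therefore
\[
H_{\L}(UV)H_{\L}=U^{-1}V^{-1}=(VU)^{-1},\qquad H_{\L}(VU)H_{\L}=(UV)^{-1}.
\]
Because an element and its inverse have the same axis, this shows that $H_{\L}$ carries $Ax_{UV}$ to $Ax_{VU}$ and $Ax_{VU}$ to $Ax_{UV}$; that is, $H_{\L}$ interchanges the two axes whose common perpendicular is $N_{UV}$.

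Next I would deduce the orthogonality. An isometry that interchanges an ordered pair of (possibly degenerate) lines must carry their unique common perpendicular to itself, by the uniqueness statement in 2.6 of \cite{Fench}; hence $H_{\L}(N_{UV})=N_{UV}$ as a set. The half-turn $H_{\L}$ fixes only $\L$ pointwise and fixes on $\partial\HH^3$ exactly the two ends of $\L$. Consequently, unless $N_{UV}=\L$, the two ends of $N_{UV}$ cannot both be fixed, so $H_{\L}$ must interchange them; by the characterization of invariant geodesics in section~\ref{section:halfturns}, $N_{UV}$ is orthogonal to $\L$. The excluded equality $N_{UV}=\L$ forces $\L\perp Ax_{UV}$, i.e. $UV$ itself a palindrome, which by the identity above happens only when $UV=VU$; this is the degenerate configuration in which $Ax_{UV}=Ax_{VU}$ and there is no proper common perpendicular, and I would dispose of it separately. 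The parabolic case is handled identically: there $Ax_{UV}$ and $Ax_{VU}$ are the degenerate lines at the two $H_{\L}$-swapped fixed points and $N_{UV}$ is the geodesic joining them.

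It then remains to show that $N_{UV}$ is genuinely the axis of a palindromic \emph{element} of $G$. The key reduction is that, by Lemma~\ref{prop:palifforthog} together with the orthogonality just proved, any word in $G$ whose axis equals $N_{UV}$ is automatically a palindrome; so the task is to realize $N_{UV}$ as an axis in $G$. To build a candidate I would use the half-turn factorizations forced by the palindrome hypothesis: since $Ax_U\perp\L$ and $Ax_V\perp\L$, the elements $UH_{\L}$ and $VH_{\L}$ are involutions, so $U=H_{M_U}H_{\L}$ and $V=H_{M_V}H_{\L}$ for geodesics $M_U,M_V$, and one checks $H_{M_U}(UV)H_{M_U}=(UV)^{-1}$ and $H_{M_V}(VU)H_{M_V}=(VU)^{-1}$, so that $M_U\perp Ax_{UV}$ and $M_V\perp Ax_{VU}$. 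Feeding these into $UV=H_{M_U}H_{H_{\L}(M_V)}$ and $VU=H_{M_V}H_{H_{\L}(M_U)}$ describes $N_{UV}$, the common perpendicular of $Ax_{UV}$ and $Ax_{VU}$, entirely in terms of the $H_{\L}$-symmetric configuration of $M_U,M_V$ and their $H_{\L}$-images. From this symmetric picture I would extract the explicit palindrome, namely the double-altitude element of section~\ref{section:hex} whose algebraic form is pinned down by the $\Pi$-map of section~\ref{sec:mapPi}, and verify that its axis is exactly $N_{UV}$.

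I expect this last verification to be the main obstacle. The orthogonality half of the statement is essentially formal once the conjugation identity is in hand, but producing a genuine group element---rather than merely a geodesic orthogonal to $\L$---with axis precisely $N_{UV}$ requires identifying the correct palindromic word and computing its fixed geodesic, since a perpendicular to $\L$ through the foot $\Pi(UV)$ is far from unique in $\HH^3$. My strategy for this step is to match the $H_{\L}$-invariant, $N_{UV}$-commuting element predicted by the half-turn calculus above with the double-altitude element, so that its palindromicity follows from Lemma~\ref{prop:palifforthog} while its axis is $N_{UV}$ by construction.
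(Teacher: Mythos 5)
Your first paragraph, the orthogonality half, is correct: from $H_{\L}UH_{\L}=U^{-1}$ and $H_{\L}VH_{\L}=V^{-1}$ you get $H_{\L}(UV)H_{\L}=(VU)^{-1}$, so $H_{\L}$ interchanges $Ax_{UV}$ and $Ax_{VU}$, hence preserves their unique common perpendicular and must swap its ends, forcing $N_{UV}\perp\L$. This is a clean synthetic substitute for the paper's computation, and your disposal of the degenerate case $N_{UV}=\L$ (which forces $UV=VU$, so that there is no unique common perpendicular to speak of) is fine.

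The second half, however, has a genuine gap, and it stems from a misreading of the conclusion. You interpret ``$N_{UV}$ is the axis of a palindrome'' as ``$N_{UV}$ is the axis of a palindromic \emph{element of} $G$,'' and you reduce the problem to realizing $N_{UV}$ as an axis in $G$. That stronger statement is false in general, so no amount of work will close this step. Concretely, let $A,B\in SL(2,\RR)$ be hyperbolic with axes the semicircles $[-1,1]$ and $[-2,2]$ and with large translation lengths, so that $G=\langle A,B\rangle$ is a classical Fuchsian Schottky group (free and purely hyperbolic) and $\L=[0,\infty]$. Take $U=A$, $V=B$ (one-letter palindromes). A direct computation shows the fixed points of $AB$ are real of opposite sign but not negatives of each other, so $Ax_{AB}$ crosses $\L$ obliquely; then $Ax_{BA}=H_{\L}(Ax_{AB})$ crosses it at the same point, and $N_{AB}$ is the geodesic through that point orthogonal to the invariant plane $\HH^2$. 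Every nontrivial element of this $G$ is hyperbolic with axis inside $\HH^2$, so no element of $G$ --- palindromic or not --- has axis $N_{AB}$. Relatedly, the involutions your construction manipulates ($H_{M_U}$, $H_{M_V}$, $H_{\L}$, the double-altitude half-turn) lie in $PSL(2,\CC)$ but generally not in $G$, so Lemma~\ref{prop:palifforthog}, which is a statement about words in $A$ and $B$, can never be invoked to certify any of them as a palindrome.

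What the lemma asserts, and what the paper proves, is a statement about a matrix rather than a group element. By Fenchel's line-matrix calculus, the common perpendicular of $Ax_X$ and $Ax_Y$ is the fixed line of the trace-zero matrix $XY-YX$ (suitably normalized); with $X=UV$, $Y=VU$ this is $T=UVVU-VUUV$, so $N_{UV}$ is the axis of $T$. The words $UVVU$ and $VUUV$ are palindromes in $A,B$, so $T$ is invariant under reversal --- that is the sense of ``palindrome'' in the conclusion. Moreover, since $\mathrm{tr}(UVVU)=\mathrm{tr}(VUUV)$ and $M^{-1}=\mathrm{tr}(M)\,\mathrm{Id}-M$ in $SL(2,\CC)$, conjugation by $H_{\L}$ sends $T$ to $(UVVU)^{-1}-(VUUV)^{-1}=-T$, and anticommutation of line matrices is exactly orthogonal intersection; so this single identity also yields your first half. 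The repair is therefore not to hunt for a palindromic element of $G$ with axis $N_{UV}$, but to exhibit this reversal-invariant line matrix.
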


\begin{proof} If $M$ is a square matrix, we let $\det{M}$ denote its determinant.
If $X$ and $Y$ are two  elements of $PSL(2,\CC)$, the line matrix determining their common perpendicular
is
 $(XY-YX)/\det(XY-YX)$ (\cite{Fench}).

Let $X=UV$ and $Y= VU$, then the altitude $N_{UV}$ is their common
perpendicular and its line matrix is  $$T_{UV} =
(UVVU-VUUV)/\det(UVVU-VUUV).$$     To see this note that
$$H_{\L}(T_{UV})=-T_{U^{-1}V^{-1}}=T_{VU}=-T_{UV}.$$  It follows that
$H_{\L}(N_{UV})=-N_{UV}$  and $N_{UV}$ is orthogonal to $\L$ and it
is the axis of the palindrome $UVVU-VUUV$.

\end{proof}

 Taken together the lemmas  imply

\begin{thm}\label{thm:firstPi} {\rm ({\bf The $\Pi$-map})} If $G=\langle A,B \rangle $ is a non-elementary group, there is a well
defined map $\Pi$ from any palindromic word $W$ in $G$ to the line $\L$. This map
 extends naturally
to pairs of palindromic words.  \end{thm}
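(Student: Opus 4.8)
The plan is to combine the two preceding lemmas into a single coherent map.  By Lemma~\ref{prop:palifforthog}, if $W=W(A,B)$ is any palindrome in $A$ and $B$, then its axis $Ax_W$ is orthogonal to the core geodesic $\L$.  Since $G$ is non-elementary, $\L$ is a genuine (non-degenerate) geodesic, so $Ax_W$ and $\L$ meet in a single well-defined point of $\HH^3$.  I would \emph{define} $\Pi(W)$ to be this point of intersection, and the first task is to check that this assignment is genuinely well defined: the axis of an isometry depends only on the element $W\in G$ (and its conjugacy class, in fact only on the cyclic word), not on any choice of representative matrix in $SL(2,\CC)$, so $\Pi(W)$ is unambiguous.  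One should also note that a palindrome could be parabolic or elliptic; in the loxodromic or elliptic case $Ax_W$ is a proper geodesic and the intersection point is an honest point of $\L\subset\HH^3$, while one must remark how the degenerate (parabolic) case is handled — there the axis is a point on $\partial\HH^3$ and orthogonality to $\L$ should be interpreted as the foot of the perpendicular from that boundary point to $\L$, which is again a well-defined point of $\L$.

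Next I would address the extension to \emph{pairs} of palindromic words.  Here Lemma~\ref{prop:NAB} does the essential work: given a primitive (or arbitrary) pair of palindromes $U,V$ in $A,B$, the double altitude $N_{UV}$, the common orthogonal to the axes of $UV$ and $VU$, is itself orthogonal to $\L$ and is in fact the axis of the palindrome $UVVU-VUUV$.  Thus I would set $\Pi(\{U,V\})$ to be the point $N_{UV}\cap\L$, equivalently $\Pi(UVVU-VUUV)$ under the map already defined on single palindromes.  The consistency statement to verify is that this genuinely depends only on the unordered pair $\{U,V\}$: swapping $U$ and $V$ replaces $N_{UV}$ by $N_{VU}$, but these coincide as geodesics since $N_{UV}$ is by construction the common perpendicular to $Ax_{UV}$ and $Ax_{VU}$, a symmetric datum, and correspondingly $UVVU-VUUV$ and $VUUV-UVVU$ differ only by sign and so determine the same line matrix in $PSL(2,\CC)$.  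This is exactly the computation $H_{\L}(T_{UV})=-T_{U^{-1}V^{-1}}=T_{VU}=-T_{UV}$ recorded in the proof of Lemma~\ref{prop:NAB}.

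Putting these together gives the theorem: on single palindromes $\Pi$ is the foot-of-perpendicular/intersection map of Lemma~\ref{prop:palifforthog}, and on pairs it is the intersection of the double altitude $N_{UV}$ with $\L$ as supplied by Lemma~\ref{prop:NAB}, the two being compatible because the double-altitude construction produces an axis of a palindrome to which the single-palindrome map applies.  The main obstacle I anticipate is not any deep computation — the algebra is already carried out in the two lemmas — but rather the bookkeeping needed to show the map is \emph{well defined} across all the degenerate and boundary cases: ensuring that orthogonality to $\L$ always yields a unique point (using non-elementarity to exclude the degenerate-$\L$ case, as the text already notes), handling parabolic palindromes via feet of perpendiculars, and confirming that the pair-map respects the unordered structure and agrees with the single-word map on the palindrome $UVVU-VUUV$.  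These are routine once organized, so the proof should consist mainly of assembling the lemmas and dispatching these definitional checks.
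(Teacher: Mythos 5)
Your proposal is correct and takes essentially the same approach as the paper: the paper's proof simply defines $\Pi(W) = Ax_W \cap \L$ using Lemma~\ref{prop:palifforthog} and $\Pi(U,V) = N_{UV} \cap \L$ using Lemma~\ref{prop:NAB}, exactly as you do. Your additional checks (matrix-representative independence, the parabolic/degenerate case, and invariance under swapping $U$ and $V$) are careful elaborations that the paper's two-line proof leaves implicit, but they do not alter the argument.
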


\begin{proof}
If $W$ is a palindrome, set  $\Pi(W) = Ax_W \cap \L$.
If $U$ and $V$ are a pair of palindromes, set $\Pi(U,V) = N_{UV} \cap \L$.
\end{proof}

  Note that a palindrome, by its definition,  must
be cyclicly reduced. The map $\Pi$  extends to a more general set of
words obtained by cyclic permutation from the palindromes. We omit
the statement since we do not need it here.

The map $\Pi$ also extends to powers of primitive elliptics using
the following proposition.

\begin{prop} \label{lemma:Elliptic Products}

Let $E$ be an elliptic of finite order but not geometrically primitive. Let $n$ be a positive
 integer so that
that $E^n$ is geometrically primitive. If $E$ is a palindrome,  $E^n$ is  a palindrome for $n$ odd
 and the product of
palindromes for $n$ even. If $E$ is a product of palindromes, then
 $E^n$ is a product of palindromes.
 \end{prop}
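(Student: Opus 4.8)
The plan is to reduce the entire statement to the algebra of the reversal anti-automorphism together with the finite-order relation satisfied by $E$. Recall that reversal reverses products, $\underleftarrow{UV}=\underleftarrow{V}\,\underleftarrow{U}$, that a word $P$ is a palindrome precisely when $\underleftarrow{P}=P$, and that the assertion ``$E$ is a product of palindromes'' means $E=P_{1}P_{2}$ with $\underleftarrow{P_{1}}=P_{1}$ and $\underleftarrow{P_{2}}=P_{2}$. Throughout, the half-turn criterion of Lemma~\ref{prop:palifforthog}, applied to the common axis shared by $E$ and all its powers, is what will let me pass freely between the word-level statements and the required orthogonality to $\L$.

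First I would dispose of the case where $E$ is a product of palindromes, since it needs no parity argument and covers all $n$ at once. Writing $E=P_{1}P_{2}$, I regroup
$$E^{n}=P_{1}\bigl[(P_{2}P_{1})^{n-1}P_{2}\bigr]$$
and set $Q_{n}=(P_{2}P_{1})^{n-1}P_{2}$. A direct application of the anti-automorphism property gives $\underleftarrow{Q_{n}}=P_{2}(P_{1}P_{2})^{n-1}=Q_{n}$, so $Q_{n}$ is a palindrome and $E^{n}=P_{1}Q_{n}$ exhibits $E^{n}$ as a product of two palindromes. This settles the last assertion of the proposition.

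Next, for $E$ a palindrome, the odd case is immediate: from $\underleftarrow{E}=E$ one gets $\underleftarrow{E^{n}}=(\underleftarrow{E})^{n}=E^{n}$, so $E^{n}$ is a palindrome for \emph{every} $n$, in particular for $n$ odd. For $n$ even I would write $E^{n}=E^{n-1}\cdot E$; since $n-1$ is odd, $E^{n-1}$ is a palindrome by the previous line and $E$ is a palindrome, so $E^{n}$ is again a product of two palindromes. These word identities are routine; the substance lies in matching them to the geometrically primitive representative.

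The main obstacle, and the place where the parity genuinely enters, is reconciling these clean identities with geometric primitivity. Here $E^{n}$ denotes the geometrically primitive group element obtained after invoking the relation $E^{m}=\mathrm{Id}$, so I must check that reducing the free word to its geometrically primitive representative preserves the claimed palindromic form. Concretely, writing $E=\tilde E^{\,k}$ with $\tilde E$ geometrically primitive and $kn\equiv 1$ modulo the order $m$, I would track the effect of the relation on the factorizations above and match the resulting canonical form against the enumeration scheme of section~\ref{section:primwds}, in which a geometrically primitive element is a single palindrome or a product of palindromes according to the parity of the product of numerator and denominator of its rational index. The bookkeeping that shows this index-parity is governed by the parity of $n$ is the crux of the argument, and Lemma~\ref{prop:palifforthog} supplies the geometric check that the words produced above have axes orthogonal to $\L$, so that the extension of $\Pi$ is well defined in every case.
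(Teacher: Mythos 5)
Your first two paragraphs are correct and are essentially the paper's own proof. The paper handles the product-of-palindromes case by the mirror-image grouping $E^n=(P_1P_2)^n=[(P_1P_2)^{n-1}P_1]\cdot P_2$, where $(P_1P_2)^{n-1}P_1$ is a palindrome; this is the same regrouping identity as your $E^n=P_1\cdot[(P_2P_1)^{n-1}P_2]$. The palindrome case, which the paper leaves implicit, is disposed of exactly as you do it: $\underleftarrow{E^n}=(\underleftarrow{E})^n=E^n$ for all $n$, and for $n$ even one records the decomposition $E^n=E^{n-1}\cdot E$ into two palindromes.

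Your third paragraph, however, rests on a misreading of the statement, and the ``crux'' you defer there does not exist. In the proposition, $E^n$ means the literal $n$-th power: the word obtained by concatenating $n$ copies of the word $E$ in the generators $A$ and $B$. The hypothesis that $E^n$ is geometrically primitive is a property of the group element $E^n$ as a rotation (it singles out which power matters, so that the map $\Pi$ of Theorem~\ref{thm:firstPi} extends to geometrically primitive elliptics); it imposes no reduction of the word modulo the relation $E^m=\mathrm{Id}$, requires no exponent $k$ with $kn\equiv 1 \pmod m$, and involves no matching against the rational indexing of section~\ref{section:primwds} (that indexing concerns primitive, i.e.\ generating, elements, not powers of a fixed elliptic). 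The palindromic conclusions are claims about the word $E\cdot E\cdots E$ itself, and your word-level identities already establish them; Lemma~\ref{prop:palifforthog} is then only used downstream, to place the axis of $E^n$ orthogonal to $\L$. So your proof is complete at the end of your second paragraph. Had the program in your third paragraph actually been needed, your proposal would be incomplete, since you describe that bookkeeping as the crux but never carry it out.
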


 \begin{proof} Set $E=P_1P_2$. $E^2 = (P_1P_2P_1)
  \cdot P_2$  and $ E^3 = (P_1P_2P_1P_2P_1) \cdot P_2.$
  Assume  $n \ge 4$, then $(P_1P_2)^{n-1}P_1$ is a palindrome and
  $E^n = (P_1P_2)^n
   = (P_1P_2)^{n-1}P_1P_2 = [(P_1P_2)^{n-1}P_1] \cdot P_2.$
\end{proof}

Note that since the power of the elliptic may  be factored in
several ways as the product of palindromes,  in  applications it is
necessary  to be clear about what factorization is used.

\subsection{Applications }
Applying these propositions to $F$ and the primitives in $F$,  we
have the following theorem for primitive elements.

\begin{thm} {\label{thm:secondPi}} {\rm ({\bf The rational $\Pi$-map})} There is a well defined map $\Pi_{\QQ}$ from the non-negative rational numbers to
the core geodesic  $\L$ and a well defined map from $\Pr_F$, the set of primitives in $F$,
 to  $\L$.  \end{thm}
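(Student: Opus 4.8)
The plan is to build the two maps of Theorem~\ref{thm:secondPi} directly from the machinery already assembled, so the proof is essentially a matter of specialization and checking well-definedness. The key point is that Theorem~\ref{thm:firstPi} gives a map $\Pi$ defined on palindromic words (and on pairs of palindromes) of $G$, and that by the classification recalled in section~\ref{section:primwds} every non-negative rational $p/q$ carries a distinguished free-group element $e_{p/q}$ which is either itself a palindrome (when $pq$ is even) or has a canonical factorization as a product of two palindromes (when $pq$ is odd). So the natural definition is $\Pi_{\QQ}(p/q) = \Pi(E_{p/q})$ when $pq$ is even, where $E_{p/q} = \rho(e_{p/q})$, and $\Pi_{\QQ}(p/q) = \Pi(P_1, P_2)$ when $pq$ is odd, where $e_{p/q} = p_1 p_2$ is the canonical palindromic factorization and $P_i = \rho(p_i)$. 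The second map, from $\Pr_F$ to $\L$, is obtained by the same recipe applied to an arbitrary primitive word rather than to the chosen conjugacy-class representative.

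First I would treat the even case. Here $e_{p/q}$ is itself a palindrome in $a,b$, so $E_{p/q}$ is a palindrome in $A,B$, and Lemma~\ref{prop:palifforthog} tells us its axis meets $\L$ orthogonally; hence $\Pi(E_{p/q}) = Ax_{E_{p/q}} \cap \L$ is a single well-defined point of $\L$. Next I would treat the odd case. Writing $e_{p/q}$ as the canonical product $p_1 p_2$ of previously-occurring palindromes, both $P_1 = \rho(p_1)$ and $P_2 = \rho(p_2)$ are palindromes in $A,B$, so Lemma~\ref{prop:NAB} applies: the double altitude $N_{P_1 P_2}$ is orthogonal to $\L$ (indeed is itself the axis of a palindrome), and therefore $\Pi(P_1,P_2) = N_{P_1 P_2} \cap \L$ is again a single well-defined point. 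Thus in either parity we land on a definite point of $\L$, and the map $\Pi_{\QQ}$ is defined on all of the non-negative rationals.

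The main obstacle — and the step that genuinely requires the earlier structural results rather than a mechanical substitution — is \emph{well-definedness}, i.e. showing the assigned point does not depend on incidental choices. Three independent ambiguities must be ruled out. First, the conjugacy-class representative itself: since $\Pi$ assigns to a palindrome the intersection of its axis with $\L$, and cyclic permutation of a palindrome (which is what passing within the conjugacy class amounts to, once one stays inside the palindromic/product-of-palindromes normal form) preserves the axis, the point on $\L$ is unchanged; the remark following Theorem~\ref{thm:firstPi} that $\Pi$ extends consistently to words obtained by cyclic permutation is exactly what is needed here. Second, in the odd case the factorization of $e_{p/q}$ as a product of palindromes is a priori non-unique, but the enumeration scheme of \cite{GKenum} singles out one canonical factorization into previously-appearing palindromes, so I would invoke that canonical choice and note (as the paper already cautions) that it is this specified factorization that is used. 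Third, for the map on all of $\Pr_F$ one must check that distinct primitive words in the same conjugacy class produce the same point; this follows from the same axis-invariance under cyclic permutation together with the fact, recalled in section~\ref{section:primwds}, that primitives are classified by rationals and each class carries a well-defined $e_{p/q}$.

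Finally I would assemble these observations: the even case uses Lemma~\ref{prop:palifforthog}, the odd case uses Lemma~\ref{prop:NAB}, and in both cases Theorem~\ref{thm:firstPi} packages the output as a point of $\L$; invariance under cyclic permutation and the canonicity of the \cite{GKenum} factorization dispose of the choices, giving a well-defined $\Pi_{\QQ}$ on the non-negative rationals and, by applying the identical construction to an arbitrary primitive word, a well-defined map on $\Pr_F$. I expect the verification of well-definedness in the odd case to be where care is most needed, since it is the only place where a genuine choice (the palindromic factorization) enters and must be pinned down by appeal to the external enumeration scheme.
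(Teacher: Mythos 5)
Your core construction is exactly the paper's: the paper's entire proof is the one-line definition $\Pi_{\QQ}(p/q)=\Pi(E_{p/q})$ and $\Pi_F(e_{p/q})=\Pi(E_{p/q})$, where $\Pi$ is the map of Theorem~\ref{thm:firstPi}, which already packages both of your parity cases (Lemma~\ref{prop:palifforthog} for $pq$ even, Lemma~\ref{prop:NAB} applied to the canonical factorization for $pq$ odd). Your even/odd analysis and your appeal to the canonicity of the \cite{GKenum} factorization are the right unpacking of that definition.

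However, one step in your well-definedness discussion is false and should be removed: cyclic permutation does \emph{not} preserve the axis. A cyclic permutation of $W$ is a conjugate $U^{-1}WU$ (by an initial segment $U$), and its axis is $U^{-1}(Ax_W)$, in general a different geodesic which need not meet $\L$ orthogonally or at all. The remark following Theorem~\ref{thm:firstPi} asserts only that the \emph{map} $\Pi$ extends to cyclic permutations of palindromes, not that such words share an axis. For the same reason, your description of the second map as ``the same recipe applied to an arbitrary primitive word'' does not literally parse: when $pq$ is even, $e_{p/q}$ is the \emph{unique} palindrome in its conjugacy class, so every other primitive in that class is not a palindrome and intersecting its axis with $\L$ is not available. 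The repair is easy and is what the paper implicitly does: no invariance argument is needed at all for $\Pi_{\QQ}$, because the enumeration scheme attaches to each rational a single word $e_{p/q}$ and (in the odd case) a single factorization, so there is no choice to check; and the map on $\Pr_F$ is defined by factoring through the classification of primitives by conjugacy class — send a primitive $w$ to $\Pi(E_{p/q})$ where $e_{p/q}$ represents the class of $w$ — rather than by operating on $Ax_{\rho(w)}$ itself.
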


\begin{proof} Define $\Pi_{\QQ}(p/q) = \Pi(E_{p/q})$ and $\Pi_F(e_{p/q})= \Pi(E_{p/q})$.
\end{proof}

Note that we can also consider $\Pi_{|_{\Pr_G}}$, the restriction of
$\Pi$ to $\Pr_G$.

Using the maps $\phi_a,\phi_b$, we can also define $\Pi$-type maps
from the non-negative rationals to the core geodesics $\L_A, \L_B$.

\section{Discreteness Conditions}
\label{section:discreteness}

 We begin this section by recalling some basic definitions. We
refer the reader to chapter 4 of \cite{Kap} or \cite{EpM}.

If $G$ is a discrete group of isometries of $\HH^3$,  we let
$\Lambda(G)$ denote its limit set and $\Omega(G)$ its   ordinary
set. We let ${\mathcal{C}}(\Lambda(G))$ be the convex hull of the
limit set. We abbreviate these to $\Lambda$, $\Omega$ and
${\mathcal{C}}$ when the group $G$ is understood. Similarly we set
$M_G= \HH^3/G$, $\partial M_G = \Omega(G)/G$,
$N_G={\mathcal{C}}(\Lambda(G))/{G}$ and $\partial N_G=
\partial{\mathcal{C}}(\Lambda(G))$ and omit the subscript $G$ when
the groups are understood.

If $G$ is discrete without parabolics, then the convex core  $N_G$
is compact.

\begin{defn} A discrete group $G$ of isometries of $\HH^3$ is
{\sl geometrically finite} if it has a convex fundamental polyhedron
with finitely many faces.
\end{defn}

\begin{defn} A discrete group $G$ of isometries of $\HH^3$ that is geometrically
finite but contains no parabolics is called {\sl convex co-compact}.
\end{defn}

If $G$ is convex-cocompact, then $M$ is either an orbifold or a
manifold. The latter occurs when $G$ contains no elliptics.

\begin{defn} If $H \subset G$ is a subgroup, a set $C \subset \Omega$
is {\em precisely invariant} under $H$ if $h(C)=C$ for all $h \in H$
and $g(C)\cap C = \emptyset$ for all   $g \in G, g \notin H$.
\end{defn}

Suppose  that $G$ is a geometrically finite group and that   $V \in
G$ is parabolic. Let  $H=\langle V \rangle$ be the cyclic group
generated by $V$. The fixed point $p$ of $V$ is the point of
tangency of pair of open disks in the ordinary set, each of which is
precisely invariant under $H$; there is also a horoball in $\HH^3$
tangent to $\CC$ at $p$ which is precisely invariant under $H$.

\subsection{Necessary Condition for Discreteness}

\begin{thm} \label{thm:gfif} {\rm ({\bf Necessity - no parabolics})} If $G= \langle A,B \rangle$ is convex co-compact,
that is, a non-elementary discrete geometrically finite group
without parabolics, then the axes of all palindromes intersect the
core geodesic $\L$ in a compact interval.\end{thm}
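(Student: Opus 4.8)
My strategy is to use the key structural facts already established: by Lemma~\ref{prop:palifforthog}, the axis of every palindrome meets $\L$ orthogonally, so each such axis already intersects $\L$ in a single point. The content of the theorem is therefore not \emph{that} the intersection is nonempty, but that the set of all such intersection points is \emph{bounded}, i.e. lies in a compact subinterval of $\L$. I would reformulate the claim as: the image $\Pi(\Pr_G)\subset \L$, and more generally the set of all points $\Pi(W)=Ax_W\cap\L$ over palindromic $W$, is a bounded subset of $\L$.

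\textbf{First steps.}
First I would invoke the hypothesis that $G$ is convex co-compact: by the definitions recalled just above the statement, $G$ is discrete, geometrically finite, and has \emph{no parabolics}, so the convex core $N_G={\mathcal C}(\Lambda)/G$ is compact. Next I would observe that the core geodesic $\L$ projects into $N_G$: since $\L$ is the common perpendicular of the axes of the generators $A$ and $B$, and those axes lie in the convex hull ${\mathcal C}(\Lambda)$ (their endpoints are in $\Lambda$), the segment of $\L$ between the feet of these perpendiculars lies in ${\mathcal C}$ as well. The crucial geometric fact I want is that each palindromic axis $Ax_W$ enters ${\mathcal C}$ and crosses $\L$ at a point whose distance along $\L$ from a fixed basepoint is controlled by how far $Ax_W$ travels inside the compact core.

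\textbf{The main argument.}
The heart of the proof is a compactness/displacement estimate. Since $N_G$ is compact, choose a compact fundamental region $K$ for the action of $G$ on ${\mathcal C}(\Lambda)$, and let $\Pi(W)=Ax_W\cap\L$. I claim that every such intersection point can be moved into $K$ by an element of $G$, while the portion of $\L$ that meets the $G$-translates of $K$ covering ${\mathcal C}$ is, modulo the stabilizer of $\L$, a bounded set. More precisely, suppose toward a contradiction that there were palindromes $W_n$ with $\Pi(W_n)\to\infty$ along $\L$. Because $N_G$ is compact, the projections of the points $\Pi(W_n)$ to $N_G$ subconverge; pulling back by suitable $g_n\in G$, the translated configurations $g_n(Ax_{W_n},\L)$ would converge geometrically, forcing two distinct translates of $\L$ to share an orthogonal geodesic in the limit and hence, by discreteness, producing relations that contradict the group being non-elementary and discrete (a limit axis accumulating would violate the proper discontinuity of the $G$-action on ${\mathcal C}$).

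\textbf{The expected obstacle.}
The step I expect to be delicate is making precise the claim that ``$\Pi(W)$ escaping to infinity along $\L$'' forces a failure of discreteness or of compactness of $N_G$. The subtlety is that $\L$ itself is generally \emph{not} invariant under $G$, so I cannot simply quotient $\L$ by a cyclic stabilizer; I must instead track how the orthogonal axes $Ax_W$ sit relative to the compact core $N_G$ and argue that an unbounded family of orthogonal palindromic axes yields an unbounded family of geodesic arcs in the compact quotient $N_G$, which is impossible since distinct closed geodesics (the projections of the $Ax_W$) in a compact hyperbolic manifold that all cross a fixed compact arc cannot have their crossing points diverge. Turning this qualitative picture into a clean bound—ruling out the alternative that the axes spiral without their $\L$-intersection escaping—is where the careful hyperbolic-geometry bookkeeping, and the use of geometric finiteness to control accidental parabolics or thin parts, will be required.
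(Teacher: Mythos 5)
Your reduction is correct as far as it goes: by Lemma~\ref{prop:palifforthog} each palindromic axis meets $\L$ orthogonally in a single point, every loxodromic palindrome has its axis inside $\mathcal{C}(\Lambda)$ (its fixed points lie in $\Lambda$), and so all the intersection points lie in $\L\cap\mathcal{C}$. The theorem is therefore equivalent to the assertion that $\L\cap\mathcal{C}$ is compact, i.e.\ that neither end of $\L$ is a limit point of $G$. This is exactly where your main argument has a genuine gap: the compactness/geometric-limit step does not produce a contradiction. If the points $\Pi(W_n)$ ran off toward an end $\xi$ of $\L$, choosing $g_n\in G$ with $g_n(\Pi(W_n))$ in a compact fundamental region $K$ only shows that infinitely many translates $g_n(\L)$ and axes $g_n(Ax_{W_n})$ meet $K$. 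Neither phenomenon violates discreteness or proper discontinuity: in a convex co-compact group, axes of group elements (and translates of any fixed geodesic) accumulate throughout the convex hull, and two distinct geodesics sharing a common orthogonal is not a ``relation'' at all. The most your mechanism can be pushed to yield (by applying proper discontinuity of the discrete group $\langle G,H_{\L}\rangle$ to the conjugate involutions $g_nH_{\L}g_n^{-1}$, whose axes $g_n(\L)$ all meet $K$) is that infinitely many elements of $G$ stabilize $\L$, hence that $G$ contains a loxodromic $C$ with axis $\L$. Ruling that out requires showing that $C(A,B)=C(A^{-1},B^{-1})$ forces $C$ to be trivial; this is true when $G$ is free as an abstract group, but the theorem allows torsion, so even the repaired version of your argument does not cover the stated hypotheses.

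The paper's proof rests on a fact your proposal never identifies or proves: the half-turn $H_{\L}$ normalizes $G$ and descends to the hyperelliptic involution on $\partial M=\Omega(G)/G$, whose fixed points (Weierstrass points) are the projections of the ends of $\L$ --- a result imported from \cite{GKhyp} and extended there to groups with elliptics. Consequently the ends of $\L$ lie in the ordinary set $\Omega$, so $\L\cap\mathcal{C}$ is compact and the theorem follows at once from the reduction above, with no limiting argument. Note that this is not a generic fact about discrete groups: a convex co-compact group can perfectly well contain an order-two elliptic whose axis has both ends in the limit set, so the claim really uses the two-generator structure and cannot be replaced by soft compactness reasoning. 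Finally, the paper devotes the second half of its proof to a case your outline does not address at all: when $G$ contains elliptic elements one must also rule out an elliptic axis sharing one or both ends with $\L$ (done via the classification of elements sharing an axis in a discrete group and the structure of the stabilizer of a point at infinity). Without the involution result --- or some substitute proof that the ends of $\L$ are not limit points --- your outline cannot be completed.
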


\begin{proof}
If $G$ is a geometrically finite group without parabolics the convex
core $N$ of $M=\HH^3/G$ is compact.

If $G$ contains no elliptics, then we saw in \cite{GKhyp} the ends
of the
 core geodesic  $\L$ project to  two of the fixed points of the
  hyperelliptic involution of the boundary manifold
  $\partial{M}=\Omega(G)/G$.   Therefore, they lie in the regular set $\Omega$.  It follows that $\L$ intersects
  the convex hull in a compact set, and every axis lies in the convex hull.

  If $G$ contains elliptics, one can extend the results of \cite{GKhyp}
 to see that there is still an involution on the quotient $  \partial M =  \Omega(G) /G$.
 It is the projection of the half-turn about
 $\L$ and the ends of $\L$ lie in the regular set. Under the
 involution on the quotient,
   the projections of
  ends of $\L$ are  fixed points as are the projections of    the ends of $\L_A$
  and $\L_B$.
  This involution with its six fixed points is the   hyperelliptic involution on
the quotient orbifold.

If $G$ is co-compact and contains elliptics,  the argument that the
intersection of $\L$ and $\mathcal C$ is compact is the same,
provided that there is
 no elliptic element whose axis shares an end with $\L$. We claim
 this cannot happen.
 Recall (see for example, \cite{Beard}) that in a
discrete group two elements that share an axis belong either to a
cyclic subgroup $H =\langle C \rangle$ generated by an element $C$
with that axis, or to the subgroup $J=\langle C,R\rangle $ where $R$
is an order two elliptic interchanging the ends of the axis. If we
normalize so the ends are $[0,\infty]$, $C$ has the form $z
\rightarrow e^{i\theta}z$ and $R$ has the form $z \rightarrow 1/z$.
It is easy to check that the element $CR$ must be a loxodromic with
the same axis and that its ends are in the limit set. Therefore the
elliptic axis cannot share both ends with $\L$ because its ends are
not in the limit set.

If the elliptic axis shares a single end with $\L$,  normalizing so
that this end is infinity,   the subgroup of elements sharing this
end consists entirely of  affine transformations of the plane, $z
\rightarrow az+b$. Such a group contains parabolic or loxodromic
elements  implying that $\L$ has an end in the limit set. Therefore
this cannot happen.
\end{proof}

This theorem also extends to groups containing parabolic elements.
\begin{thm} \label{thm:gfifp}  \rm{{\bf  (Necessity with
parabolics)}} If $G= \langle A,B \rangle$  is a non-elementary
discrete geometrically finite group, then the axes of all
non-parabolic palindromes intersect the core geodesic $\L$ in a
compact interval.
\end{thm}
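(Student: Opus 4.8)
The plan is to reduce Theorem~\ref{thm:gfifp} to the parabolic-free case treated in Theorem~\ref{thm:gfif} by analyzing what goes wrong at the cusps. The central difficulty is that when $G$ contains parabolics the convex core $N$ is no longer compact: it has cusp ends, and consequently the core geodesic $\L$ may run off into these cusps. So the intersection $\L \cap \mathcal{C}$ need no longer be compact, and likewise the axis of a palindrome could in principle spiral toward a parabolic fixed point. The theorem statement already anticipates this by excluding \emph{parabolic} palindromes, so the real content is to show that for a \emph{non-parabolic} palindrome, its axis still meets $\L$ in a compact interval despite the noncompactness of $N$.

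First I would establish, exactly as in the proof of Theorem~\ref{thm:gfif}, that the ends of $\L$ lie in the regular set $\Omega$. The argument via the hyperelliptic involution on $\partial M = \Omega(G)/G$ should carry over: the half-turn $H_\L$ descends to an involution of the quotient boundary with the projections of the ends of $\L$, $\L_A$, $\L_B$ among its fixed points, forcing those ends into $\Omega$. I would then have to confirm that the presence of parabolics does not destroy this picture — the key point being that a parabolic fixed point is a single point of $\Lambda$, and the precisely-invariant horoball/disk structure recalled just before the theorem shows the cusps are controlled. In particular, an end of $\L$ cannot coincide with a parabolic fixed point, by the same normalization argument used at the close of Theorem~\ref{thm:gfif}: if an end of $\L$ were parabolic, the stabilizer of that end would contain a parabolic, and one checks the ends of $\L$ are not in $\Lambda$.

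Next, for the palindrome itself. By Lemma~\ref{prop:palifforthog} the axis $Ax_W$ of a palindrome $W$ is orthogonal to $\L$, so $Ax_W \cap \L$ is a single point $\Pi(W)$ whenever they meet; the issue is purely whether $Ax_W$ actually crosses $\L$ transversally in $\HH^3$ rather than meeting it only at the boundary. Since $W$ is \emph{non-parabolic}, $W$ is loxodromic or elliptic, so $Ax_W$ is a genuine geodesic with both ends in $\Lambda \subset \hat{\CC}$ and hence $Ax_W \subset \mathcal{C}$. Orthogonality to $\L$ together with both ends of $\L$ lying in $\Omega$ forces the two geodesics to actually intersect in the interior of $\HH^3$, giving the compact (single-point) intersection. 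I would phrase this as: a geodesic orthogonal to $\L$ and contained in the convex hull must cross $\L$, because the endpoints of $\L$ separate $\hat{\CC}$ into two regions and orthogonality to $\L$ places one endpoint of $Ax_W$ in each region, but since the ends of $\L$ are not limit points they cannot be endpoints of $Ax_W$, so $Ax_W$ genuinely meets $\L$ inside $\HH^3$.

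The step I expect to be the main obstacle is verifying rigorously that the hyperelliptic-involution description of $\partial M$ from \cite{GKhyp} survives the introduction of parabolics, i.e.\ that the ends of $\L$ (and of $\L_A$, $\L_B$) remain in $\Omega$ and in particular are never parabolic fixed points. The geometrically finite hypothesis is what makes this tractable, since it gives the precisely-invariant cusp neighborhoods; but one must argue carefully that no palindromic axis and no end of $\L$ gets absorbed into a cusp. I would organize the argument so that the only new case beyond Theorem~\ref{thm:gfif} is the possibility that $Ax_W$ or an end of $\L$ approaches a parabolic point, and then rule this out using the same stabilizer/normalization analysis, observing that the ends of $\L$ lie in $\Omega$ and that a non-parabolic $W$ has both axis endpoints in $\Lambda$ away from $\L$'s regular-set ends.
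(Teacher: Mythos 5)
Your reduction to Theorem~\ref{thm:gfif} rests on the claim that, even in the presence of parabolics, the ends of $\L$ lie in the regular set $\Omega$ and in particular are never parabolic fixed points. That claim is false, and it fails in exactly the situation this theorem is designed to handle. A single generator is itself a palindrome, and there are non-elementary geometrically finite groups $G=\langle A,B\rangle$ in which $A$ is parabolic (for instance groups generated by two parabolics, such as finite-covolume two-bridge knot groups). In that case the axis of $A$ is the degenerate line $[p,p]$ at its fixed point $p$, and $\L$, being the common perpendicular from $[p,p]$ to $Ax_B$, has $p$ as one of its ends. So an end of $\L$ \emph{is} a parabolic fixed point, it lies in $\Lambda$, and $\L\cap\mathcal{C}$ is not compact; no stabilizer or normalization argument can rule this out, because it actually happens. (The same conclusion follows from Lemma~\ref{prop:palifforthog}: the degenerate axis of any parabolic palindrome must be invariant under $H_{\L}$, hence must sit at an end of $\L$.) The paper's proof accepts this and argues differently: geometric finiteness gives the thick--thin decomposition of the convex core, Margulis' lemma gives horoballs at the (at most two) parabolic ends of $\L$, and these horoballs contain no loxodromic axes (no closed geodesics in the thin part) and no elliptic axes (no orbifold points there). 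Hence every non-parabolic palindromic axis meets $\L$ only in the complement of those horoballs, which is compact. Your proposal is missing precisely this mechanism, and it is the heart of the theorem.

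A secondary but related problem: you read the conclusion as ``each palindromic axis meets $\L$ in a compact set (a point),'' which is essentially immediate from Lemma~\ref{prop:palifforthog} and would need no geometric finiteness at all. The actual content of the theorem is uniform: the intersection points of \emph{all} non-parabolic palindromic axes with $\L$ lie in one fixed compact subinterval of $\L$, i.e.\ they do not escape toward the ends of $\L$. This uniformity is what makes the statement the genuine converse of the sufficiency criterion, Theorem~\ref{thm:conv}, where non-discreteness is detected by producing palindromic axes that cross $\L$ arbitrarily far out.
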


\begin{proof}  Assume $G$ is a geometrically
finite, non-elementary group containing
  parabolic elements.   By the geometric
finiteness, every abelian subgroup has rank $1$ and the convex core
has a thick and thin part.  Moreover,  each component of   the thin
part comes from  the conjugacy classes of cyclic subgroups of
parabolic elements. By Margulis' lemma \cite{Margulis,Thurston},
there is an $\epsilon$ neighborhood about the thin part that
contains no closed geodesics. The only possible axes that can
project into the core are the axes of the conjugacy class of the
parabolic(s). Any orbifold points are also outside the thin part.

  If  $P$ is a  parabolic palindrome,
its axis intersects $\L$   at one or the other end of the $\L$ line;
call such an end $x$. The thin part corresponding to the cyclic
subgroup containing $P$   lifts to a horoball at $x$ and this
horoball can contain no axes of loxodromics since there are no
geodesics in the thin part and it can contain no elliptic axes since
there are no orbifold points in the thin part.

If this is the only cyclic subgroup containing palindromic
parabolics, the other end of $\L$ is in the regular set $\Omega$ and
the convex hull boundary intersects $\L$ at a finite point.

If there is another parabolic palindrome  $Q$ not contained in the
cyclic subgroup containing $P$,  its axis and its fixed point must
be at the other end of $\L$ and $\L$ minus the intersections of $\L$
with each of the horoballs at the ends of $\L$
 is compact.  Since $\L$ has only two ends, we have accounted for all the palindromic parabolic elements
\end{proof}

\subsection{Sufficient Condition for Discreteness}
There is a stronger result in the opposite direction.  This is
independent of whether the group contains parabolic elements.

\begin{thm} \label{thm:conv} {\rm ({\bf Sufficiency})} If all palindromes in a non-elementary group
$G=\langle A,B \rangle $ have axes that intersect the core geodesic
$\L$ in a compact interval   then $G$ is discrete.
\end{thm}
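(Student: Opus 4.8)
The plan is to prove the contrapositive: assuming $G$ is not discrete, I will produce palindromes whose axes meet $\L$ at points escaping to an end of $\L$, contradicting the hypothesis that all such intersection points lie in a fixed compact subinterval. First I normalize, conjugating $G$ so that $\L=[0,\infty]$ and the half-turn $H_{\L}$ of section~\ref{section:halfturns} acts on $\hat\CC$ as $z\mapsto -z$. Then the relations $H_{\L}AH_{\L}=A^{-1}$ and $H_{\L}BH_{\L}=B^{-1}$ underlying Lemma~\ref{prop:palifforthog} translate, for any $g=\left(\begin{smallmatrix} a&b\\ c&d\end{smallmatrix}\right)\in G$, into $H_{\L}gH_{\L}=\left(\begin{smallmatrix} a&-b\\ -c&d\end{smallmatrix}\right)$. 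Comparing with Lemma~\ref{prop:palifforthog}, an element of $G$ is a palindrome exactly when its matrix has equal diagonal entries, $\left(\begin{smallmatrix} a&b\\ c&a\end{smallmatrix}\right)$; such an element fixes the symmetric pair $\pm\sqrt{b/c}$, so its axis is the semicircle centred at $0$ of radius $\sqrt{|b/c|}$ and meets $\L$ orthogonally at height $\sqrt{|b/c|}$. Thus $\Pi(W)$ (in the notation of Theorem~\ref{thm:firstPi}) has height $\sqrt{|b/c|}$, and the hypothesis says precisely that $\sqrt{|b/c|}$ stays in a fixed interval $[c_1,c_2]\subset(0,\infty)$ over all palindromes $W$.

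The engine is the elementary observation that for every $g\in G$ the word $g\,\underleftarrow{g}$ is a palindrome, since $\underleftarrow{\,g\,\underleftarrow{g}\,}=\underleftarrow{\underleftarrow{g}}\;\underleftarrow{g}=g\,\underleftarrow{g}$, and likewise $\underleftarrow{g}\,g$. Using $\underleftarrow{g}=H_{\L}g^{-1}H_{\L}$, a direct computation gives off-diagonal entries $2ab,\,2cd$ for $g\,\underleftarrow{g}$ and $2bd,\,2ac$ for $\underleftarrow{g}\,g$, so the hypothesis forces $|a/d|\cdot|b/c|$ and $|d/a|\cdot|b/c|$ — hence both $|b/c|$ and $|a/d|$ — to be bounded above and below for \emph{every} $g\in G$, not merely for palindromes.

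Now suppose $G$ is non-discrete. Then its closure $\bar G$ is a Lie subgroup of positive dimension, and since $G$ is non-elementary so is $\bar G$; as every connected subgroup of dimension $1$ or $2$ fixes a point or a point-pair, the identity component $\bar G^{0}$ must be three-dimensional, conjugate to one of $PSL(2,\CC)$, $PSL(2,\RR)$, or $PSU(2)$. In the dense case $\bar G=PSL(2,\CC)$ the entries range densely over $\{ad-bc=1\}$, so I may choose $g\in G$ with $a,d$ bounded away from $0$ and $\infty$ while $|b/c|$ is as large (or as small) as I like; the palindrome $g\,\underleftarrow{g}$, of $\Pi$-height $\sqrt{|a/d|\,|b/c|}$, then lands arbitrarily close to an end of $\L$, contradicting the interval $[c_1,c_2]$.

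The hard part is the two remaining possibilities, $\bar G^{0}$ conjugate to $PSL(2,\RR)$ or to $PSU(2)$. In these cases the limit set is a round circle (respectively all of $\hat\CC$ with antipodal symmetry), and the $H_{\L}$-symmetry forces it to be invariant under $z\mapsto -z$; the palindromes are then hyperbolic (respectively elliptic) with fixed points constrained to lie on this symmetric circle, so their feet on $\L$ can stay bounded and the escaping-feet mechanism fails outright. This is the genuine obstacle, and it is exactly where the compactness hypothesis must be invoked substantively: I would show that such a symmetric invariant circle must pass through an end of $\L$ — equivalently that $\L$ has an end in $\Lambda(G)$ — so that the feet of suitable palindromes accumulate at that end, or else that the parabolic/elliptic palindromes these configurations necessarily contain already force $\Pi$ to the boundary of $\L$. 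Ruling out that the Fuchsian and spherical configurations can coexist with a compact interval of feet bounded away from both ends of $\L$ is the crux on which the theorem turns.
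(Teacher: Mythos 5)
Your first three paragraphs are, modulo packaging, exactly the paper's own argument. The paper makes the same normalization $\L=[0,\infty]$, records that $\underleftarrow{U}=H_{\L}U^{-1}H_{\L}$ has matrix $\left(\begin{smallmatrix} d&b\\ c&a\end{smallmatrix}\right)$ when $U=\left(\begin{smallmatrix} a&b\\ c&d\end{smallmatrix}\right)$, and uses precisely your two palindromes $U\underleftarrow{U}$ and $\underleftarrow{U}U$, whose feet on $\L$ sit at heights $\sqrt{|ab/cd|}$ and $\sqrt{|bd/ac|}$ with product $|b/c|$. The only difference is how an element with bad entry ratios is produced: the paper takes $U=C^nDC^{-n}$ with $C$ loxodromic and its attracting fixed point within $\epsilon$ of the end $0$ of $\L$, so that both fixed points of $U$ are near $0$, hence $|b/c|=|pq|<\epsilon^2$ and one of the two palindromic feet lies below height $\epsilon$; you instead invoke density of $G$ in $PSL(2,\CC)$ to make $|b/c|$ large while $|a/d|\approx1$. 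Your uniform bound on $|b/c|$ and $|a/d|$ over \emph{all} of $G$ is a clean repackaging of this mechanism. (A minor slip: connected subgroups fixing a point are not confined to dimensions $1$ and $2$ --- the Euclidean and affine groups have dimensions $3$ and $4$ --- but the trichotomy for $\bar G^{0}$ is correct, via normality of $\bar G^{0}$ in $\bar G$.)

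The gap is your final paragraph, and it is fatal --- not because you failed to find the right lemma, but because the lemma you hope for is false, and in the configurations you flag the statement being proved fails as written. Take $A=\left(\begin{smallmatrix} \cosh t&\sinh t\\ \sinh t&\cosh t\end{smallmatrix}\right)$ and $B=\left(\begin{smallmatrix} \cosh s&e^{i\theta}\sinh s\\ e^{-i\theta}\sinh s&\cosh s\end{smallmatrix}\right)$ in $SU(1,1)$: they are loxodromic with fixed points $\pm1$ and $\pm e^{i\theta}$, their axes cross at the summit of the hemisphere over the unit disk, and $\L=[0,\infty]$. For generic parameters the commutator is elliptic of infinite order, so $G=\langle A,B\rangle$ is non-discrete, and it is non-elementary under any definition. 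Yet every palindromic word $W$ satisfies $H_{\L}WH_{\L}=W^{-1}$, which for $\left(\begin{smallmatrix} \alpha&\beta\\ \bar\beta&\bar\alpha\end{smallmatrix}\right)\in SU(1,1)$ forces $\alpha\in\RR$ (the lone exception $W=H_{\L}$ being excluded generically); the fixed points $\pm\sqrt{\beta/\bar\beta}$ then lie on the unit circle, so \emph{every} palindromic axis crosses $\L$ at the single point of height $1$. The hypothesis holds, the conclusion fails, and your hoped-for claim that the symmetric invariant circle must pass through an end of $\L$ is refuted: the circle $|z|=1$ is invariant under $z\mapsto-z$ but misses $0$ and $\infty$. (A dense subgroup of $PSU(2)$ behaves the same way: all palindromic axes pass through its fixed point in $\HH^3$; and when $\bar G^{0}$ is a conjugate of $PSL(2,\RR)$ with the axes of $A$ and $B$ disjoint, the ends of $\L$ do lie on the circle and both your argument and the paper's go through.) You should know that the published proof founders on the same rock: its assertion that a non-discrete non-elementary group has limit set all of $\hat\CC$ --- which is what lets it find loxodromic fixed points near $0$ --- is false exactly when $\bar G^{0}$ is conjugate to $PSL(2,\RR)$ or $PSU(2)$, where the closure of the loxodromic fixed points is a round circle or there are no loxodromics at all. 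So your instinct about where the crux lies is sharper than the paper's treatment of it; but flagging the crux is not resolving it, and as written your proposal (like the paper's proof) establishes the theorem only in the case $\bar G=PSL(2,\CC)$; closing the remaining cases requires strengthening the hypotheses, not more ingenuity.
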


\begin{proof}  In what follows, we work with elements of $SL(2,\CC)$.

We can normalize and assume the ends of $\L$ are $0$ and $\infty$.

Note $\L$ is never a point even if the axes of $A$ and $B$
intersect. Since $G$ is non-elementary $\L$ is not a degenerate
line. If the axes of $A$ and $B$ intersect in $\HH^3$, $\L$  is
defined to be the geodesic perpendicular to the axes of $A$ and $B$.

  For any non-elementary group, given any two disjoint open sets
that intersect the limit set
 there
is a loxodromic element of the group whose fixed points each lie in
one of these open sets (see \cite{Beard}).

Now suppose $G$ is not discrete and is non-elementary and suppose we
are given   $\epsilon > 0$.  Since the
   limit set is the whole plane,   we
can   find an element $C$ whose attracting fixed  point has
absolute value less than $\epsilon$.

We claim we  can also find an element $U$ both of whose
 fixed points $p,q$ have absolute values   less than $\epsilon$ and whose trace has a definite finite value.  To do this,
    choose a loxodromic  element $C$ with attracting
    fixed point very close to zero and fix an   element $D$ in $G$.

   Set $U=C^nDC^{-n}$. Its trace is equal to the trace of $D$ for any $n$ and it  satisfies the fixed point condition
    for large $n$.    Write
$U= \left(\begin{array}{cc}
         a & b \\
         c & d  \end{array}\right)$ with $ad-bc=1$. Since the trace is  a fixed finite number and the fixed points have
         absolute value less than $\epsilon$,  we deduce that
         $|c|=\mathrm O(1/\epsilon)$.

The element $V=\underleftarrow{U}$ has the form $V= -
\left(\begin{array}{cc}
         d & b \\
         c & a  \end{array}\right)$.
To see this recall that $\underleftarrow{U}^{-1} = H_L U H_L$ and
$H_L = \left(\begin{array}{cc}
i & 0\\
0 & -i \end{array}\right)$, and note that the image of $V$  in
$PSL(2,\CC)$ is the same as that of  $ \left(\begin{array}{cc}
         d & b \\
         c & a  \end{array}\right).$

         The elements $UV$ and $VU$ are palindromes by
construction. Their fixed points are, respectively
$\pm\sqrt{\frac{a}{d}}\sqrt{\frac{b}{d}}$ and
$\pm\sqrt{\frac{b}{a}}\sqrt{\frac{d}{c}}$.

 In at least one of these pairs both   fixed points lie close
to zero.

Therefore, given the compact interval $I$ in $\L$, we can choose
$\epsilon$ small enough and $U$ such that the axis of either $UV$ or
$VU$ (or both) intersect $L$ outside $I$. \end{proof}

\section{The pleating locus} \label{section:pleating}

In the case that $G$ is geometrically finite,  we want to describe
the pleating locus of the convex hull boundary. In \cite{GKhyp}
where $G$ is Fuchsian and free, we saw that the convex hull boundary
degenerates to a double of the Nielsen convex region and its
``pleating locus'' is the boundary of that region.  We described the
pleating locus in terms of certain generators and proved that it
always consisted of simple closed geodesics on the quotient.  If the
original generators for $\rho(F)$ were $A$ and $B$ where $A$ and $B$
had intersecting axes, the pleating locus was the projection of the
axis of the multiplicative commutator.  If, on the other hand, $A$
and $B$ had disjoint axes, the pleating locus consisted of the axes
determined by the so called {\sl stopping generators} $A_0,B_0$ for
the $PSL(2,\RR)$ discreteness algorithm \cite{GKwords}; namely, the
three unique simple geodesics that are the shortest geodesics  on
the surface and are the projections of the axes of $A_0$, $B_0$ and
$A_0B_0$.

 In the next section, we
recall some basic facts about the convex hull and its pleating locus
for a discrete group. References can be found in
\cite{CEpG,EpM,Kap,Thurston}. We apply some  results from
\cite{GKhyp}.

\subsection{Terminology: Pleated surfaces and the pleating locus}

Recall
\begin{defn} A {\sl pleated surface} is a pair $(X,
\mathfrak{pl})$ where $X$ is a complete  hyperbolic surface and
$\mathfrak{pl}$ is an immersion and a hyperbolic isometry of $X$
into a hyperbolic manifold $H$ such that every point in $X$ is in
the interior of some geodesic arc which is mapped to a geodesic in
$H$. The {\sl pleating locus} is the set of points in $X$ that are
contained in the interior of exactly one geodesic arc that is mapped
by $\mathfrak{pl}$ to a geodesic arc. The {\sl support of the
pleating locus} is denoted $|\mathfrak{pl}|$. The map
$\mathfrak{pl}$ is called the {\sl pleating map} and $H$ is called
the {\sl target manifold}
\end{defn}

\begin{remark} By abuse of notation we often identify $|\mathfrak{pl}|$ with
its image in $H$ and use the same symbol for it. \end{remark}

 If $G$ is a non-elementary
discrete group of isometries of $\HH^3$,
$\;{\mathcal{C}}(\Lambda(G))$ denotes the convex hull of the limit
set of $G$.  It can be constructed using {\sl support planes}. The
{\sl support planes} are hyperbolic planes in $\HH^3$ whose horizons
contain points of $\Lambda(G)$ and have the property that one of the
disks they bound contains no points of $\Lambda(G)$. The convex hull
is then the intersection of the common half-spaces bounded by the
support planes.  It is clearly $G$-invariant.

The support planes intersect in hyperbolic lines. The lines that lie
on $\partial\mathcal C$   form a geodesic lamination; that is, they
are a closed set consisting of disjoint hyperbolic geodesics. This
lamination is also called the {\em pleating locus of
$\partial\mathcal C$}. The geodesics in the pleating locus are
called pleating lines or leaves.  The pleating locus is also
$G$-invariant. See \cite{EpM} for a full discussion. The
complementary regions of the pleating locus are hyperbolic polygons
lying in a support plane with vertices on its horizon. The set of
geodesics obtained by taking the closure of the boundaries of the
complementary regions form the geodesic lamination of pleating
lines.

 If $G$ contains an elliptic
element $E$, the ends of its axis are in $\Omega(G)$ and the axis
intersects  the convex hull boundary in two points, $x$ and $y$.
Suppose a pleating line $\ell$ goes through $x$. Because $E(\ell)$
is also a pleating line and because $E(x)=x$, it follows that
$E(\ell)=\ell$ and $\ell$ is the axis of $E$.  The ends of $\ell$,
however, lie in $\Lambda$.  It follows that $x$ lies in a support
plane $T$ that is mapped to itself by $E$.  The complementary
component containing $x$ must be invariant under $E$ and, using $x$
as a vertex, can be subdivided into polygonal regions mapped to one
another by $E$.

 There is a complete hyperbolic structure
on $\partial{\mathcal{C}}$,  the boundary of the complex hull,
 defined by the hyperbolic structures on the
support planes glued together along the pleating locus. The boundary
$\partial{\mathcal{C}}$ with its pleating lamination is the image of
a pleating map by $PL$ which we will describe   in more detail
shortly. Since the support planes and pleating locus are
$G$-invariant,  the pleated surface structure and the pleating lines
project to the boundary of the quotient
$\partial{N}=\partial\mathcal C/G$ turning $\partial{N}$ into a
pleated surface with target manifold (or orbifold if $G$ contains
elliptics) $\HH^3/G$. The pleating map onto this target is the
composition of $PL$ with projection to the quotient $\pi_G: \HH^3
\rightarrow \HH^3/G$.

 We denote by $|PL|$ the support of the pleating lamination of $\partial{\mathcal{C}}$ in  $\mathbb{H}^3$
and denote by $|pl|$ its   projection to $\partial{N}$. There are
two pleating maps and pleated surfaces. Our goal is to describe the
pleating map $PL$ to $\partial{\mathcal{C}}$. The pleating map to
the quotient $\partial{N}$ is an incidental  tool that helps us
describe the pleating locus of ${\partial{\mathcal{C}}}$.

 Note that
if $G$ is geometrically finite and contains a parabolic element with
fixed point $p$, $\partial\mathcal C$ contains a pair of support
planes whose horizons are tangent at $p$ so that no (non-degenerate)
  pleating line can pass through $p$.

If the group $G$ is discrete but contains elliptics of finite order
so that the {\sl target manifold} in the construction is actually an
orbifold, we still have  a {\sl pleated surface} and use the same
terminology referring to the target as opposed to the target
manifold or obifold.

\begin{defn} We
say the pleating locus $|PL|$ (respectively $|pl|$) is {\em maximal}
if the complementary components of
  $|PL| \in \partial{\mathcal C}$ (respectively $|pl| \in \partial N$) are
 all hyperbolic triangles.
\end{defn}

\subsection{The Pleating Maps}

In order to define the pleated surface whose image is
$\partial\mathcal C$ we need a hyperbolic surface   and a pleating
map.

The boundary of the convex hull in $\HH^3$ is homeomorphic to the
regular set $\Omega=\Omega(G)$.  It has an intrinsic hyperbolic
structure defined by the hyperbolic structures on the complementary
regions glued together along their boundary geodesics. Each of the
complementary regions can be embedded in $\CC$ preserving their
hyperbolic structures and the images of the embeddings can be glued
together with their boundaries identified as determined by their
location on $\partial{\mathcal{C}}$ to form an
  infinitely connected planar domain $\tilde\Omega$,
homeomorphic to $\Omega$.  The domain $\tilde\Omega$ has a complete
hyperbolic structure whose restriction to the images of the
complementary regions agrees with their structures.  The boundaries
of the regions form a geodesic lamination on $\tilde\Omega$ with
respect to its hyperbolic structure. This is our domain surface.

Now we define  $PL$  to be the map from $\tilde\Omega$ to $\HH^3$
whose image is $\partial{\mathcal{C}} \subset \HH^3$. Thus the pair
$(\tilde\Omega,  PL)$
 is a pleated surface where the target manifold  is $\HH^3$ and $| PL|$,
 the (image of the) support of the pleating locus, is precisely the geodesic
 lamination of
 pleating lines or
 pleating locus of the group $G$.
 For any curve $\delta$ in $\partial\mathcal{C}$, we let
$\tilde{\delta}$ denote its preimage in $\tilde{\Omega}$.

  For $g \in G$, let $g_{\tilde\Omega} = PL \circ g
 \circ {PL}^{-1}$ and set  $G_{\tilde\Omega} = \{g_{\tilde\Omega} \; | \;  g \in G \}$.
 The map $PL$ induces a group  isomorphism between the groups $G_{\tilde\Omega}$ and
 $G$ so that there is a hyperbolic  isometry from   $
 \tilde\Omega/G_{\tilde\Omega}$ to  $\Omega/G$.
We look
at the universal covering $\HH^2$ of $\partial N =
\partial \mathcal{C}/G$  with covering group $\hat\Gamma$. Now
$\hat\Gamma$ is a Fuchsian group. The covering projection
$\pi_{\hat\Gamma}: \HH^2 \rightarrow
\partial N$ preserves the hyperbolic structure.
This covering factors through $\tilde\Omega$; that is since $\Omega$
and $\tilde\Omega$ are homeomorphic,  there is a map
$\pi_{\tilde{\Omega}}: \HH^2 \rightarrow {\tilde{\Omega}}$ such that
$\pi_{\hat\Gamma}   = \pi_G \circ PL \circ \pi_{\tilde\Omega}$ and
we have a group homomorphism $h$ of $\hat\Gamma$ into  $G$.

\subsection{Description of the Pleating Locus}

We now assume that $G=\langle A,B \rangle$ is a geometrically finite
group. As in section~\ref{section:discreteness},   let $\L$ be the
core geodesic, that is, the   common orthogonal to the axes of $A$
and $B$.

\begin{thm}  \label{thm:pleats} {\rm ({\bf Pleating})} Assume  that the non-elementary group $G=\langle A,B \rangle$ is
geometrically finite. Let  $\I$ be the intersection of the core
geodesic $\L$ with $\mathcal{C}$, the convex hull of the limit set
of $G$,  and let $Q_1$ and $Q_2$ be the endpoints of $\I$. Let
$(\partial{\mathcal{C}}, PL)$ be the corresponding pleated surface.
Then one of the following occurs
\begin{enumerate}
\item There is a  leaf of $|PL|$, the support of the image of the pleating locus of $\partial\mathcal C$,
 intersecting  $\I$
orthogonally at one   of its endpoints and this leaf is an
accumulation of axes of palindromes. This leaf may intersect
$\I$ at both $Q_1$ and $Q_2$ or there may be two such distinct
leaves one intersecting $\I$ at $Q_1$ and the other ar $Q_2$.

\item  Every leaf of $|PL|$ is disjoint from $\I$ and $Q_1$ and
$Q_2$ lie in complementary components   of $\partial{\mathcal
C}$. In this case the pleating is not maximal.
\end{enumerate}
\end{thm}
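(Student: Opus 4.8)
The plan is to exploit the symmetry of the entire configuration under the half-turn $H_\L$. Since $H_\L A H_\L = A^{-1}$ and $H_\L B H_\L = B^{-1}$, the half-turn normalizes $G$, and therefore preserves the limit set, the convex hull $\mathcal C$, its boundary $\partial\mathcal C$, and the pleating locus $|PL|$; moreover $H_\L$ fixes $\L$ pointwise, so it fixes each endpoint $Q_1,Q_2$ of $\I=\L\cap\mathcal C$. The relative interior of $\I$ lies in the interior of $\mathcal C$, so a leaf of $|PL|$, which lies on $\partial\mathcal C$, can meet $\I$ only at $Q_1$ or $Q_2$. For each endpoint $Q_i$ there are exactly two possibilities, since the complement of $|PL|$ in $\partial\mathcal C$ is a disjoint union of open flat regions: either $Q_i$ lies on a (necessarily unique) leaf, or $Q_i$ lies in the interior of a complementary region.

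First I would treat the case where $Q_i$ lies on a leaf $\ell$. Because leaves are pairwise disjoint, $\ell$ is the only leaf through $Q_i$; as $H_\L(Q_i)=Q_i$ and $H_\L$ permutes leaves, $H_\L(\ell)$ is again a leaf through $Q_i$, forcing $H_\L(\ell)=\ell$. Since $\L$ crosses $\partial\mathcal C$ transversally at $Q_i$ we have $\ell\neq\L$, and by the discussion of half-turns in section~\ref{section:halfturns} an $H_\L$-invariant geodesic other than $\L$ is orthogonal to $\L$. Thus $\ell$ meets $\I$ orthogonally at $Q_i$, giving alternative (1). To identify $\ell$ as an accumulation of palindrome axes I would recall that, by Lemma~\ref{prop:palifforthog}, a geodesic is a palindrome axis precisely when it is orthogonal to $\L$, and obtain the approximating palindromes from $\ell$ itself: every leaf of a geodesic lamination on the finite-type surface $\partial N$ lies in the closure of its closed leaves, and, via the enumeration of \cite{GKenum} and the map $\Pi$, the relevant simple closed geodesics on $\partial N$ are images of primitive elements $E_{p/q}$, represented by palindromes. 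Lifting these approximating curves to the leaves through points of $\L$ near $Q_i$ produces palindrome axes, all orthogonal to $\L$ by Lemma~\ref{prop:palifforthog}, converging to $\ell$.

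Next I would treat the case where $Q_i$ lies in the interior of a complementary region $R$, a totally geodesic ideal polygon spanning a support plane $T$. As $Q_i$ is fixed by $H_\L$ and $R$ is the unique region whose interior contains $Q_i$, both $R$ and $T$ are $H_\L$-invariant. Since $\L$ meets $T$ transversally but is not contained in it, a direct analysis of the invariant planes of a half-turn shows that the only $H_\L$-invariant plane transverse to $\L$ is the one orthogonal to $\L$; hence $\L\perp T$ and $H_\L|_T$ is the order-two elliptic rotation of $T$ about $Q_i$. This rotation has no fixed point on the horizon of $T$, so it permutes the ideal vertices of $R$ freely, forcing their number to be even and at least four. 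Therefore $R$ is not an ideal triangle and the pleating is not maximal. If this occurs at both endpoints, no leaf passes through $Q_1$ or $Q_2$, so every leaf is disjoint from $\I$ and both endpoints lie in complementary components: this is alternative (2). Collecting the two endpoints, either at least one of them lies on a leaf, yielding a leaf orthogonal to $\I$ through an endpoint (alternative (1), with the two endpoints possibly carrying the same leaf or two distinct ones), or both lie in region interiors, yielding alternative (2).

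The main obstacle is the claim in alternative (1) that the orthogonal leaf $\ell$ is an accumulation of palindrome axes. The orthogonality of $\ell$ to $\L$ follows cleanly from the $H_\L$-symmetry, but showing that $\ell$ is a limit of \emph{palindrome} axes requires combining the approximation of laminations by their closed leaves with the precise dictionary between simple closed geodesics on $\partial N$, primitive elements, and palindromes coming from \cite{GKenum,GKhyp}. Keeping track of the cusps and cone points of $\partial N$ in this dictionary, and ensuring that the chosen lifts of the approximating curves converge to $\ell$ itself rather than to some neighboring leaf, is where the real work lies.
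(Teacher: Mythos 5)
Your treatment of the $H_{\L}$-symmetry is sound and matches the paper: invariance of $|PL|$ under $H_{\L}$, orthogonality of any leaf through $Q_i$, and the parity argument showing that a complementary region containing a fixed point of $H_{\L}$ cannot be an ideal triangle (the paper counts boundary leaves where you count ideal vertices; both work). But there is a genuine gap exactly where you yourself locate ``the real work'': the claim that the orthogonal leaf $\ell$ is an accumulation of axes of palindromes. Your route is to invoke a dictionary under which the simple closed geodesics on $\partial N$ approximating $\pi_G(\ell)$ ``are images of primitive elements $E_{p/q}$, represented by palindromes,'' and then to lift. This fails on two counts. First, no such dictionary exists in the generality needed: when $G$ is free of rank two without parabolics, $\partial N$ is a genus-two surface, and most simple closed geodesics on it do not correspond to primitive conjugacy classes of $G$ (some are even in the kernel of $\pi_1(\partial N)\to G$); the approximating curves furnished by density of simple closed geodesics in the Thurston topology are arbitrary, and you may not assume they are primitive or palindromic. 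Second, even for a conjugacy class that does contain a palindrome, the particular lift converging to $\ell$ is the axis of some conjugate $gW_ng^{-1}$, not of the palindromic representative; Lemma~\ref{prop:palifforthog} tells you the palindrome's axis is orthogonal to $\L$, but orthogonality alone does not force those axes to converge to $\ell$ rather than to cross $\L$ elsewhere. (Also, your statement that a leaf of a lamination ``lies in the closure of its closed leaves'' is false for laminations with no closed leaves; what is true, and what the paper uses, is that $\pi_G(\ell)$ lies in the closure of the set of \emph{all} simple closed geodesics on $\partial N$.)

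The paper fills this gap with an idea your proposal misses: it does not look for palindromes among the approximating elements, it manufactures them. Starting from any non-trivial elements $W_n\in G$ whose axes converge to $\ell$ (obtained from the approximating curves via the covering group $\hat\Gamma$ and the homomorphism $h:\hat\Gamma\to G$), it replaces each non-palindromic $W_n$ by $P_n=\underleftarrow{W_n}\,W_n$, which is a palindrome by construction, and then proves that the axes of $P_n$ still converge to $\ell$ by an explicit matrix computation: normalizing $\L=[0,\infty]$ and $\ell=[-r,r]$, the fixed points of $P_n$ are $\pm\sqrt{a_nb_n/(c_nd_n)}$; since discreteness forces the traces $a_n+d_n$ to diverge while the fixed points of $W_n$ tend to $\pm r$, one gets $a_n/d_n\to 1$ and hence the fixed points of $P_n$ tend to $\pm r$ as well, so the palindromic axes accumulate on $\ell$ through points of $\L$ tending to $Q_1$. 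Without this palindromization-plus-estimate step, or a genuine substitute for it, alternative (1) of the theorem is not proved.
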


\begin{proof}  We proved in \cite{GKhyp} that if $G$ is Fuchsian,  then
$H_{\L}$, the half turn about $\L$ projects to the hyperelliptic
involution on $\partial{M}$  and  the ends of $\L$ project to
Weierstrass points on $\partial{M}$ so that they are in the ordinary
set. We also proved that $H_{\L}$ induces the hyperelliptic
involution on $\partial N$.  That proof extends in a straightforward
manner to any geometrically finite $G$.

 We proceed
through several steps:

\begin{enumerate}
\item We note that $H_{\L}$ fixes the limit set and thus it fixes
the convex hull and the boundary of the convex hull. We claim it
also fixes the pleating locus.

\noindent {\it Proof of claim:}  Let $\ell$ be a leaf of the
pleating locus and let $x$ and $y$ denote its ends. Suppose $\L$
intersects a leaf $\ell$ in the pleating locus. Then
$H_{\L}(\ell) \cap \ell \ne \emptyset$;  that is, the image of
the leaf under the half-turn will intersect itself at the point
of intersection with $\L$. But leaves in the pleating locus are
disjoint so $H_{\L}(\ell) = \ell$  and $\ell$ is orthogonal to
$\L$.

Suppose $H_{\L}(\ell)$ does not intersect $\ell$ and suppose
$\ell$ is the intersection line of two support planes. Since
$H_{\L}$ maps support planes to support planes, it will preserve
intersections of support planes and $\H_{\L}(\ell)$ is a
pleating line.  If $\ell$ is a limit of pleating lines $\ell_n$
that are intersections of support planes,
 then $H_{\L}(\ell_n)$ are pleating lines and, since the
 pleating lines form a closed set, their limit $H_{\L}(\ell)$ is
 also a pleating line.

\item

Assume there is a leaf $\ell$ of $|PL|$ passing through one of
the points $Q_1$ or $Q_2$.   Since $|PL|$ is invariant under
$H_{\L}$ and the leaves are disjoint, $\ell$ is orthogonal to
$\L$.  If $\ell$ is the axis of an element of $G$, and thus
projects to a simple closed curve on $\partial N$, the element
must be a palindrome because its axis is orthogonal to $\L$ and
we are done. (Note the projection is simple because leaves
project to leaves and these are simple.)

 \item If $\ell$ is not the axis of an element we have to argue
further. We need to find a sequence of palindromic elements
  whose axes converge to $\ell$. We
cannot do this directly because, in general, axes of palindromic
elements are not leaves of the pleating.

\item
 The idea of the proof is to find a sequence of closed
approximating leaves to $\pi_G(\ell)$ on $\partial N$.  These leaves
lift to a sequence of axes   of elements of $\hat\Gamma$ that limit
on a lift of  $ \pi_G(\ell)$ in $\HH^2$ by $\pi_{\hat\Gamma}^{-1}$.
  Because
$\hat\Gamma$ is discrete, the traces of the elements in this
sequence diverge to infinity.

We then look at the images of these axes in $\mathcal C$ under
the map  $  PL \circ \pi_{\tilde\Omega}$.  Since their ends
converge to the ends of $\ell$,  they are ends of axes of
non-trivial elements of $G$ and since $G$ is discrete,  the
traces of these elements also diverge to infinity. We use this
sequence of elements in $G$ to construct a sequence of
palindromes in $G$ whose axes converge to $\ell$.
\item
In order to find the appropriate lifts, recall that  the axes of
$A$ and $B$ lie in $\mathcal{C}$ and are invariant under $H_\L$.
Using a variant of the nearest point retraction, we can find a
curve $\alpha$ invariant under $A$ and a curve  $\beta$
invariant under $B$ both lying on $\partial{\mathcal{C}}$ and
these curves can be chosen to be  invariant under $H_{\L}$. In
the same way, we can also find a closed curve $\lambda$ on
$\partial\mathcal C$ passing through $Q_1$ and $Q_2$ invariant
under $H_{\L}$.

  The map ${\widetilde{H_{{\L}}}}= {PL}^{-1} H_{\L} PL$ is an
involution on $\tilde\Omega$. The curves $\tilde{\alpha},$
$\tilde{\beta},$
and $\tilde{\lambda}$
are invariant under ${\widetilde{H_{\L}}}$.  Denote the
pre-images of the points $Q_1, Q_2$ by $\tilde{Q_1},\tilde{Q_2}$

We now choose elements $\hat{A},\hat{B}$ of $\hat\Gamma$ whose
axes project to curves in the free homotopy classes of
$\tilde\alpha, \tilde\beta$ and have the additional property
that their common orthogonal $\hat\L$ projects to
$\tilde\lambda$.  The involution ${\widetilde{H_{{\L}}}}$ lifts
via $\pi_{\tilde{\Omega}^{-1}}$  to $\HH^2$ where it is
reflection in the  line $\hat\L$.

\item Assume for definiteness that the leaf $\ell$ passes through $Q_1$ and that
the axis of $A$ intersects $\L$ between $Q_1$ and its intersection
with the axis of $B$.  Let $\tilde\ell=PL^{-1}(\ell)$; then it
passes through $\tilde{Q_1}$.  Let $\hat{Q_1}$ be the closest
pre-image of $\tilde{Q_1}$ under $\pi_{\tilde{\Omega}^{-1}}$ to the
intersection of the axis of $\hat{A}$ and $\hat\L$ on the far side
of the intersection of $\hat{\L}$ and the axis of $\hat{B}$.

The projection $\gamma=\pi_G(\ell)$ is a simple open infinite
curve on $\partial N$
   in the closure of the set of simple closed geodesics on
$\partial N$ in the Thurston topology on these curves. We can
therefore choose a sequence of simple closed geodesics
$\gamma_n$ on $\partial N$ converging to $\gamma$ and a sequence
of lifts $\hat{\ell_n}$ in $\HH^2$ converging to $\hat\ell$.
These lifts are axes of elements of $\hat\Gamma$ that we denote
by $\hat{W}_n$. Because the axes converge to $\hat\ell $, for
$n$ large they intersect $\hat{L}$, but they do not necessarily
intersect orthogonally. Since they are converging to $\hat\ell$
which is orthogonal to $\hat{L}$, the angles of intersection
tend to a right angle.

\item Consider the images
$PL \circ \pi_{\tilde\Omega}(\hat{\ell_n})=\ell_n$ in $\partial
\mathcal C$. Because their axes converge to $\hat\ell$,   the
corresponding words $W_n = h(\hat{W}_n) \in G$ are non-trivial.
 If $W_n$ is not a palindrome, then $W_n$ and $H_{\L}W_nH_{\L}$
have distinct axes and their common normal intersects $\L$.

We claim we can make our choice of $\gamma_n$ so that infinitely
many of the $W_n$ are palindromes.  We do this by constructing a
palindrome $P_n$ from each $W_n$ that is  not a palindrome and
replacing the non-palindromic $W_n$ by the $P_n$ so constructed.
\item Suppose  $L=[0,\infty]$ and $\gamma=[-r,r]$. Set
$$W_n =\left(\begin{array}{cc} a_n & b_n \\ c_n & d_n \end{array}\right)$$ where
$a_nd_n-b_nc_n=1$. Then, as we saw in
section~\ref{section:discreteness},
$${\underleftarrow{W_n}}= \left(\begin{array}{cc} d_n & b_n \\ c_n &
a_n
\end{array}\right).$$

Now
$$W_n{\underleftarrow{W_n}}= \left(\begin{array}{cc} a_nd_n+b_nc_n &
2a_nb_n \\ 2c_nd_n & a_nd_n+b_nc_n \end{array}\right).$$

By the construction of $W_n$, the fixed points of $W_n$  tend to
the ends $\pm r$ of $\gamma$. Since $G$ is co-compact, $r$ is
finite.

The fixed points of $W_n$ are $$\frac{(a_n-d_n) \pm
\sqrt{(a_n-d_n)^2-4b_nc_n}}{2c_n},$$  the fixed points of
${\underleftarrow{W_n}}$ are just the rotations of the fixed
points of $W_n$ by $\pi$; namely,
$$\frac{(d_n-a_n) \pm
\sqrt{(a_n-d_n)^2-4b_nc_n}}{2c_n}.$$ The fixed points of the
palindromes  $P_n = {\underleftarrow{W_n}}W_n$ are
$$\pm \sqrt{\frac{a_nb_n}{c_nd_n}}.$$

Since the axes of $W_n$ approach a pleating line, the fixed
points of $W_n$ tend to $\pm r$ and the discreteness of the
group implies the traces $(a_n+d_n)$ tend to infinity.

It follows that either or both $|a_n|$ and $|d_n|$ go to infinity,
$(a_n-d_n)/2c_n \rightarrow 0$, $r=\lim_{n\to \infty}
\sqrt{b_n/c_n}$.  We claim $\lim a_n/d_n = 1$.

Suppose first that $\lim c_n = c$ with $c  \neq \infty$; then,
since $(a_n-d_n)/2c_n \rightarrow 0$, $\lim a_n -d_n =0$ and
$\lim a_n/d_n = 1 $. If $\lim c_n =\infty$ and  $\lim a_n - d_n
= f$ where $f  \neq \infty$, then again $\lim a_n/d_n - 1 = \lim
f/d_n = 0$ or $\lim 1-d_n/a_n= \lim f/a_n = 0$.

We deduce that $$\lim \pm \sqrt{\frac{a_nb_n}{c_nd_n}}= \lim\pm
\sqrt{\frac{ b_n}{c_n }}= \pm r.$$

We conclude that the axes of the palindromes $P_n$ intersect
$\L$ in a sequence of points that tend to $Q_1$.  The axes must
accumulate in the plane perpendicular to $\L$ at $Q_1$ and their
ends tend to the ends of $\ell$ as required.

\item  Suppose now that no
pleating line passes through   the point $Q_1$  and that it is
contained in a complementary region  $\sigma$ of $|PL|$.   Since
$Q_1$ is fixed by $H_{\L}$, and $H_{\L}$ preserves the pleating
lines and complementary regions, $H_{\L}(\sigma)=\sigma$. Therefore,
if $\ell$ is one leaf of $\partial\sigma $, $H_{\L}(\ell)$ is also a
leaf of $\partial\sigma$.  Since no boundary leaf passes through
$Q_1$, no boundary leaf is invariant under $H_{\L}$ and the boundary
leaves occur in pairs; thus the region cannot be a triangle.
\end{enumerate}\end{proof}

\bibliographystyle{amsplain}

\end{document}